\documentclass[12pt,twoside,reqno]{amsart}
\usepackage{mathptmx, amsmath, amssymb, amsfonts, amsthm, enumerate, mathrsfs}
\usepackage{xcolor}
\usepackage[colorlinks=true,citecolor=blue]{hyperref}

\hypersetup{
  pdftitle={Directional Optimality Conditions for Optimization Problems in Asplund Spaces},
  pdfauthor={Weihao Mao and Jane J. Ye}
}

\newtheorem{theorem}{Theorem}[section]
\newtheorem{lemma}[theorem]{Lemma}
\newtheorem{proposition}[theorem]{Proposition}
\newtheorem{corollary}[theorem]{Corollary}

\theoremstyle{definition}
\newtheorem{definition}[theorem]{Definition}

\newtheorem{example}[theorem]{Example}

\newtheorem{remark}[theorem]{Remark}
\numberwithin{equation}{section}

\begin{document}
\title[Directional Optimality Conditions for Optimization Problems in Asplund Spaces] 
{Directional Optimality Conditions for Optimization Problems in Asplund Spaces}

\author[W. Mao]{Weihao Mao}
\address{Department of Mathematics and Statistics, University of Victoria, Victoria, BC V8W 2Y2, Canada}
\email{mwhaea123456@gmail.com}

\author[J. J. Ye]{Jane J. Ye}
\address{Department of Mathematics and Statistics, University of Victoria, Victoria, BC V8W 2Y2, Canada}
\email{janeye@uvic.ca}
\thanks{Corresponding author: Jane J. Ye.}

\subjclass[2020]{49J52, 49J53, 49K27, 90C48}
\keywords{Directional metric subregularity, Asplund space, Directional optimality condition, Quasi-normality, Pseudo-normality}

\begin{abstract}
This paper develops directional necessary optimality conditions for constrained optimization problems in Asplund spaces. Under directional metric subregularity, we first derive a directional optimality condition in terms of limiting subdifferentials. We then introduce sufficient conditions for directional metric subregularity and establish their relationships with directional pseudo-normality and quasi-normality. For systems with joint constraints, we further propose a joint criterion for directional metric subregularity and use it to obtain necessary optimality conditions. The results not only extend several finite-dimensional directional constructions to an Asplund-space setting, but also yield conclusions that are new even in finite dimensions.
\end{abstract}

\maketitle

\section{Introduction}
In this paper, we investigate a constrained optimization model of the form
\begin{equation}\label{eq:basic_model}
    \min_{x \in X} \quad f(x) \qquad \text{s.t.} \quad P(x) \in \Lambda,
\end{equation}
where $X$ and $Y$ are Asplund spaces, $f: X \to \mathbb{R}$ is a locally Lipschitz function, $P: X \to Y$ is a continuous mapping, and $\Lambda \subset Y$ is a closed set. By defining the set-valued mapping $G(x) := P(x) - \Lambda$, the constraint $P(x) \in \Lambda$ can be equivalently expressed as $0 \in G(x)$. 
This formulation serves as a unified framework that encompasses various problems, such as nonconvex optimization problems (see, e.g., \cite{bonnans2013perturbation}) and mathematical programs with geometric constraints (see, e.g., \cite{guo2013mathematical}).

Traditionally, necessary optimality conditions are derived under standard constraint qualifications (CQs). However, these CQs fail in many cases. This limitation motivates the development of directional variational analysis, which characterizes the system behavior along specific directions. Requiring regularity only along these directions yields sharper characterizations of local minimizers.

While directional metric subregularity and its applications to optimality conditions are well-developed in finite dimensions, extending these results to infinite-dimensional Banach spaces presents significant analytical hurdles. Only a few works address this setting, often under restrictive assumptions \cite{gfrerer2013directional,gfrerer2014metric,long2017calculus,long2017calculusb}. The primary difficulty is that sufficient conditions for directional metric subregularity typically rely on the norm compactness of bounded sets \cite{bai2019directional}, a property that fails in infinite dimensions.

Furthermore, the difference between the weak\(^*\) topology and the norm topology on the dual space $Y^*$ in infinite-dimensional settings prevents the direct generalization of the techniques in \cite{gfrerer2014metric,bai2019directional}. In the limiting process for optimality conditions, this causes the ``vanishing multiplier problem,'' in which a sequence of dual elements has unit norm but a vanishing weak\(^*\) limit. 

To overcome these issues, \cite{gfrerer2013directional} utilized coderivatives and partial sequential normal compactness, while \cite{long2017calculus,long2017calculusb} introduced directional inner semicompactness. Although significant, these results are often formulated via abstract coderivatives, which obscures the derivation of necessary conditions in the standard limiting-stationarity form using directional subdifferentials. Moreover, some assumptions of these results remain difficult to verify in applications.

Motivated by these challenges, this paper presents several developments, summarized as follows:
\begin{itemize}
    \item We derive necessary optimality conditions for \eqref{eq:basic_model} in Asplund spaces by employing the directional metric subregularity and the weak\(^*\) strict Lipschitz property (Theorem~\ref{thm:further_optimality_mscq}). These conditions are characterized via scalarization of coderivatives, giving a more computable form of the results in \cite{gfrerer2013directional}. 
    \item To facilitate the verification of directional metric subregularity, we consider the weak sufficient condition for metric subregularity, directional pseudo-normality and directional quasi-normality. Under directional PSNC, these conditions are sufficient for directional metric subregularity in Asplund spaces (Theorem~\ref{thm:sufficient_condition_for_mscq}, Corollary~\ref{cor:dpn_to_mscq} and  Corollary~\ref{cor:dqn_to_mscq}). A joint sequential qualification of the same type is then established for joint constraint systems (Theorem~\ref{thm:directional_mscq_intersection}, Corollary~\ref{coro:directional_mscq_intersection_finite} and Corollary~\ref{coro:intersection_mscq_finite_differentiable}. These criteria extend \cite{bai2019directional,gfrerer2017new} to Asplund spaces, and some of them are new even in finite dimensions.
    \item We apply our theoretical framework to derive directional optimality conditions for optimization problems (Theorem~\ref{thm:optimality_condition_ocpec}). Compared to classical results in \cite{gfrerer2014metric,long2017calculusb}, these conditions are more concrete and tractable. 
\end{itemize}

The remainder of this paper is organized as follows. Section~\ref{sec:preliminary} reviews the preliminaries of variational analysis and introduces their directional variants. Section~\ref{sec:optimality_condition} establishes necessary optimality conditions under directional metric subregularity. Section~\ref{sec:sufficient_condition_for_mscq} develops sufficient conditions for directional metric subregularity. Section~\ref{sec:applications} demonstrates applications to optimization problems. Section~\ref{sec:conclusions} summarizes the main results.

\subsection*{Notation}
Unless otherwise specified, $X$ and $Y$ are Banach spaces, with topological duals $X^*$ and $Y^*$. The duality pairing between $X^*$ and $X$ is written $\langle \cdot, \cdot \rangle$. We use $\|\cdot\|$ for the norm in the space under consideration. For a Banach space $Z$, we denote by $\mathbb{B}_Z$ and $\mathbb{S}_Z$ the closed unit ball and the unit sphere of $Z$, respectively; in particular, $\mathbb{B}_{X^*}$ and $\mathbb{S}_{X^*}$ stand for the closed unit ball and the unit sphere of $X^*$. When the underlying space is clear from the context, we simply write $\mathbb{B}$ and $\mathbb{B}^*$ (resp.\ $\mathbb{S}$ and $\mathbb{S}^*$). Weak\(^*\) convergence is denoted by $\xrightarrow{w^*}$.

The distance from a point $x\in X$ to a set $\Omega\subset X$ is $\operatorname{dist}(x,\Omega):=\inf_{z\in\Omega}\|x-z\|$, with the convention $\operatorname{dist}(x,\emptyset)=+\infty$. The indicator function of $\Omega$ is $\delta_\Omega$, equal to $0$ on $\Omega$ and $+\infty$ outside $\Omega$. For a set-valued mapping $F:X\rightrightarrows Y$, the graph and the domain are
\[
    \mathrm{gph}\, F := \bigl\{ (x,y)\in X\times Y \;\big|\; y\in F(x) \bigr\}, \qquad
    \mathrm{dom}\, F := \bigl\{ x\in X \;\big|\; F(x)\neq\emptyset \bigr\}.
\]
The symbol $\limsup$ applied to a family of sets stands for the sequential Painlev\'e--Kuratowski outer limit; when the sets lie in a dual space, the outer limit is understood in the weak\(^*\) topology.

\section{Preliminaries}\label{sec:preliminary}

In this section, we recall several fundamental notions and properties from variational analysis that are indispensable for our subsequent developments. Beyond the standard concepts, we introduce their directional counterparts and establish several key properties essential for our framework. Many of the results presented herein serve as infinite-dimensional extensions of the finite-dimensional analogues developed in \cite{gfrerer2013directional,benko2019calculus,Bai2023Directional}. 

\subsection{Tangent Cones, Normal Cones and Coderivatives}\label{subsec:prelim_variational}
Throughout this paper, we primarily utilize the Fr\'echet and Mordukhovich (limiting) normal cones, along with their associated subdifferentials, coderivatives, and directional variants.  

Let $X$ and $Y$ be Banach spaces, and let $\Omega \subset X$ be a nonempty subset. For any $\bar{x} \in \Omega$, the \emph{contingent cone} (also known as the Bouligand tangent cone) to $\Omega$ at $\bar{x}$ is defined by
\begin{equation*}
    T(\bar{x}; \Omega) := \left\{ u \in X \;\middle|\; \exists\, t_k \downarrow 0, \, \{x_k\} \subset \Omega \text{ s.t. } \frac{x_k - \bar{x}}{t_k} \to u \right\}.
\end{equation*}
It is well known that $T(\bar{x}; \Omega)$ is a closed cone, though not necessarily convex.

For a fixed $\varepsilon \ge 0$, the set of \emph{$\varepsilon$-normals} to $\Omega$ at $\bar{x} \in \Omega$ is defined as
\begin{equation*}
    \widehat{N}_{\varepsilon}(\bar{x}; \Omega) := \left\{ x^* \in X^* \;\middle|\; \limsup_{x \xrightarrow{\Omega} \bar{x}, \, x \neq \bar{x}} \frac{\langle x^*, x - \bar{x} \rangle}{\|x - \bar{x}\|} \le \varepsilon \right\},
\end{equation*}
where $x \xrightarrow{\Omega} \bar{x}$ signifies that $x \to \bar{x}$ with $x \in \Omega$. By convention, we set $\widehat{N}_{\varepsilon}(\bar{x}; \Omega) = X^*$ if $\Omega = \{\bar{x}\}$, and $\widehat{N}_{\varepsilon}(\bar{x}; \Omega) = \emptyset$ if $\bar{x} \notin \Omega$. In the case where $\varepsilon = 0$, the set $\widehat{N}(\bar{x}; \Omega) := \widehat{N}_0(\bar{x}; \Omega)$ is called the \emph{Fr\'echet normal cone} (or \emph{prenormal cone}) to $\Omega$ at $\bar{x}$.

The \emph{Mordukhovich} (limiting) normal cone to $\Omega$ at $\bar{x}$ is defined as
\begin{equation*}
    N(\bar{x};\Omega) := \left\{ x^* \in X^* \;\middle|\; \exists\, \varepsilon_k \downarrow 0, \, x_k \xrightarrow{\Omega} \bar{x}, \, x_k^* \xrightarrow{w^*} x^*, \, x_k^* \in \widehat{N}_{\varepsilon_k}(x_k;\Omega) \right\}.
\end{equation*}
When $\Omega$ is convex, both $\widehat{N}(\bar{x}; \Omega)$ and $N(\bar{x}; \Omega)$ reduce to the normal cone in the sense of convex analysis. Let $\bar{u} \in X$ be a given direction. The \emph{directional Mordukhovich normal cone} to $\Omega$ at $\bar{x} \in \Omega$ in the direction $\bar{u}$ is defined by
\begin{equation*}
    N(\bar{x}; \Omega; \bar{u}) := \limsup_{\varepsilon, t \downarrow 0, \, u \to \bar{u}} \widehat{N}_{\varepsilon}(\bar{x} + tu; \Omega),
\end{equation*}
which means that $x^* \in N(\bar{x}; \Omega; \bar{u})$ if and only if there exist sequences $\varepsilon_k \downarrow 0$, $t_k \downarrow 0$, $u_k \to \bar{u}$, and $x_k^* \xrightarrow{w^*} x^*$ such that $\bar{x} + t_k u_k \in \Omega$ and $x_k^* \in \widehat{N}_{\varepsilon_k}(\bar{x} + t_k u_k; \Omega)$ for all $k \in \mathbb{N}$. 
When $X$ is an Asplund space and $\Omega$ is locally closed around $\bar{x}$, the $\varepsilon$-dependence can be omitted (cf. \cite[Theorem~2.34]{Mordukhovich2006variational}), leading to the following simplified representation:
\begin{equation*}
    N(\bar{x}; \Omega; \bar{u}) = \limsup_{t \downarrow 0, \, u \to \bar{u}} \widehat{N}(\bar{x} + tu; \Omega).
\end{equation*}
It follows directly from the definition that $N(\bar{x}; \Omega; \bar{u}) = \emptyset$ when $\bar{u} \notin T(\bar{x}; \Omega)$. Moreover, the directional normal cone satisfies the following properties:
\begin{equation*}
\begin{gathered}
    N(\bar{x}; \Omega; \lambda \bar{u}) = N(\bar{x}; \Omega; \bar{u}) \quad \forall \lambda > 0, \\
    N(\bar{x}; \Omega; 0) = N(\bar{x}; \Omega), \qquad
    N(\bar{x}; \Omega; \bar{u}) \subset N(\bar{x}; \Omega).
\end{gathered}
\end{equation*}
In general, the inclusion $N(\bar{x}; \Omega; \bar{u}) \subset N(\bar{x}; \Omega)$ may be strict (see \cite{long2017calculus}). Analogous to the non-directional case, the directional normal cone satisfies the product rule as established in \cite[(3.14) and Theorem~3.7]{long2017calculus}.  

\begin{proposition} \label{prop:dir_normal_product}
Let $\Omega_i \subset X_i$ $(i=1, 2)$ be nonempty subsets and $\bar{x} = (\bar{x}_1, \bar{x}_2) \in \Omega := \Omega_1 \times \Omega_2$. For any direction $\bar{u} = (\bar{u}_1, \bar{u}_2) \in X_1 \times X_2$, the following inclusion holds:
\begin{equation}
    N(\bar{x}; \Omega; \bar{u}) \subset N(\bar{x}_1; \Omega_1; \bar{u}_1) \times N(\bar{x}_2; \Omega_2; \bar{u}_2).
\end{equation}
\end{proposition}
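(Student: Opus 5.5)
The plan is to work directly from the sequential definition of the directional normal cone and to split the $\varepsilon$-normal inequality into its two coordinates. We equip $X_1\times X_2$ with the sum norm $\|(x_1,x_2)\|=\|x_1\|+\|x_2\|$, whose dual norm is the max norm. Any other product norm is equivalent, and this only changes the constants in front of $\varepsilon$. Weak$^*$ convergence in $X_1^*\times X_2^*$ is coordinatewise convergence.

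Fix $x^*=(x_1^*,x_2^*)\in N(\bar{x};\Omega;\bar{u})$. By definition there are sequences $\varepsilon_k\downarrow0$, $t_k\downarrow0$, $u_k=(u_{1k},u_{2k})\to\bar{u}$ and $x_k^*=(x_{1k}^*,x_{2k}^*)\xrightarrow{w^*}x^*$ such that $\bar{x}+t_ku_k\in\Omega$ and $x_k^*\in\widehat{N}_{\varepsilon_k}(\bar{x}+t_ku_k;\Omega)$. Coordinatewise this gives $u_{ik}\to\bar{u}_i$, $\bar{x}_i+t_ku_{ik}\in\Omega_i$, and $x_{ik}^*\xrightarrow{w^*}x_i^*$ for $i=1,2$. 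It therefore remains to show $x_{ik}^*\in\widehat{N}_{c\varepsilon_k}(\bar{x}_i+t_ku_{ik};\Omega_i)$ for some fixed constant $c$. Since $c\varepsilon_k\downarrow0$ as well, this yields $x_i^*\in N(\bar{x}_i;\Omega_i;\bar{u}_i)$.

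The key step is a local splitting lemma: if $(z_1^*,z_2^*)\in\widehat{N}_\varepsilon((z_1,z_2);\Omega_1\times\Omega_2)$, then $z_1^*\in\widehat{N}_\varepsilon(z_1;\Omega_1)$, and symmetrically for $z_2^*$. To prove it, test the defining limsup only along points $(x_1,z_2)$ with $x_1\xrightarrow{\Omega_1}z_1$, which lie in $\Omega$. For such points the quotient becomes $\langle z_1^*,x_1-z_1\rangle/\|x_1-z_1\|$, because the second coordinate contributes nothing and the sum norm equals $\|x_1-z_1\|$. The limsup over this restricted set is bounded by the full limsup, hence by $\varepsilon$. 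With a general equivalent product norm, the denominator changes by a bounded factor, giving the constant $c$.

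The degenerate cases need care, and this is where I expect the only real friction. If $z_1$ is an isolated point of $\Omega_1$, the convention $\widehat{N}_\varepsilon=X_1^*$ gives the claim trivially. The convention stated for $\Omega=\{\bar{x}\}$ should be read as "no nearby points, limsup over an empty set", so the same reading covers this case.

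Applying the lemma at $z=\bar{x}+t_ku_k$ for each $k$ gives the two coordinate memberships along the same sequences $\varepsilon_k$ and $t_k$, which completes the inclusion. Note that no Asplund assumption or closedness is needed, since we work with $\varepsilon$-normals throughout.
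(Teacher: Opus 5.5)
Your proof is correct and complete. The paper does not prove this proposition itself; it only attributes it to \cite[(3.14) and Theorem~3.7]{long2017calculus}.

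Your route is the self-contained elementary argument that such a citation stands for:
\begin{enumerate}
\item[(a)] Unpack the sequential definition of $N(\bar{x};\Omega;\bar{u})$.
\item[(b)] Note that norm convergence of $u_k$ and weak$^*$ convergence of $x_k^*$ in the product both split coordinatewise.
\item[(c)] Transfer $\varepsilon$-normality to each factor by testing the defining $\limsup$ only along the slices $(x_1,z_2)$ with $x_1\in\Omega_1$.
\end{enumerate}
Step (c) is the directional use of the inclusion $\widehat{N}_\varepsilon\bigl((z_1,z_2);\Omega_1\times\Omega_2\bigr)\subset\widehat{N}_\varepsilon(z_1;\Omega_1)\times\widehat{N}_\varepsilon(z_2;\Omega_2)$ for the sum norm. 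You apply it at $z=\bar{x}+t_ku_k$, keeping the same $t_k$ and $\varepsilon_k$ in both factors.

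Compared with the bare citation, your argument makes two things explicit that the paper leaves implicit. First, neither the Asplund property nor closedness is used, which matches the generality of the statement (arbitrary nonempty subsets of Banach spaces). Second, the choice of product norm is immaterial, because the norm-equivalence constant is absorbed by $\varepsilon_k\downarrow 0$, and the weak$^*$ and norm topologies do not change.

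Reading the isolated-point case as a $\limsup$ over an empty set, so that $\widehat{N}_\varepsilon$ is the whole dual, is the standard convention and is fine.

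Your proof also shows why only the inclusion $\subset$ is asserted. The reverse inclusion would need a single sequence $t_k$ along which both factors are approached in their respective directions $\bar{u}_1,\bar{u}_2$, and such synchronization can fail.
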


For a set-valued mapping $F: X \rightrightarrows Y$ between Banach spaces, the \emph{graphical derivative} of $F$ at $(\bar{x}, \bar{y}) \in \mathrm{gph}\, F$ in the direction $\bar{u}$ is defined as  
\begin{equation*}
    \begin{aligned}
        \xi \in DF(\bar{x} \mid \bar{y})(\bar{u}) \iff (\bar{u}, \xi) \in T\bigl((\bar{x}, \bar{y}); \mathrm{gph}\, F\bigr).
    \end{aligned}
\end{equation*}
When $F$ is single-valued, we write $DF(\bar{x})(\bar{u})$ for $DF(\bar{x} \mid F(\bar{x}))(\bar{u})$.

The Fr\'echet coderivative and the \emph{normal coderivative} of $F$ at $(\bar{x}, \bar{y}) \in \mathrm{gph}\, F$ are the set-valued mappings $\widehat{D}_N^* F(\bar{x}, \bar{y}): Y^* \rightrightarrows X^*$ and $D_N^* F(\bar{x}, \bar{y}): Y^* \rightrightarrows X^*$ defined by
\begin{align*}
    \widehat{D}_N^* F(\bar{x}, \bar{y})(y^*) &:= \left\{ x^* \in X^* \;\middle|\; (x^*, -y^*) \in \widehat{N}\big((\bar{x}, \bar{y}); \mathrm{gph}\, F\big) \right\}, \quad \forall y^* \in Y^*;\\
    D_N^* F(\bar{x}, \bar{y})(y^*) &:= \left\{ x^* \in X^* \;\middle|\; (x^*, -y^*) \in N\big((\bar{x}, \bar{y}); \mathrm{gph}\, F\big) \right\}, \quad \forall y^* \in Y^*.
\end{align*}
Correspondingly, for any \(y^*\in Y^*\), the \emph{directional limiting
coderivative} of \(F\) at \((\bar{x},\bar{y})\) in a direction
\((\bar{u},\bar{v})\in X\times Y\) is defined by
\begin{equation*}
    D_N^* F\big((\bar{x}, \bar{y}); (\bar{u}, \bar{v})\big)(y^*) := \left\{ x^* \in X^* \;\middle|\; (x^*, -y^*) \in N\big((\bar{x}, \bar{y}); \mathrm{gph}\, F; (\bar{u}, \bar{v})\big) \right\}.
\end{equation*}
When $F$ is a single-valued mapping $\varphi$, we simplify the notation by writing $D_N^* \varphi\big(\bar{x}; (\bar{u}, \bar{v})\big)$. Furthermore, when $\varphi$ is Hadamard directionally differentiable at $\bar{x}$ in the direction $\bar{u}$, this notation can be further streamlined to $D_N^* \varphi(\bar{x}; \bar{u})$, as the direction $\bar{v} = \varphi'_H(\bar{x}; \bar{u})$ is uniquely determined as defined below. 

\begin{definition}\label{defn:hadamard_derivative}
Consider a mapping \(\varphi:X\to Y\) between Banach spaces. If the
following limit exists, it is called the \emph{Hadamard directional
derivative} of \(\varphi\) at \(\bar{x}\) along \(\bar{u}\):
\[
    \varphi'_H(\bar{x};\bar{u})
    :=\lim_{t\downarrow0,\,u'\to\bar{u}}
    \frac{\varphi(\bar{x}+tu')-\varphi(\bar{x})}{t}.
\]
\end{definition}

\subsection{Subdifferentials and Directional Subdifferentials}

In this subsection, we present the fundamental concepts of directional differentiability and subdifferentials. To describe the local behaviors along specified directions, we begin by introducing the notion of a \emph{directional neighborhood}.

\begin{definition}\label{defn:directional_neighborhood}
Fix $\bar{x},d\in X$, where $X$ is a Banach space. Given $\varepsilon,\delta>0$, set
\begin{equation*}
    \mathcal{V}_{\varepsilon,\delta}(\bar{x}; d) := \bar{x} + \left\{ z \in \varepsilon \mathbb{B}_X \;\middle|\; \big\| \|d\|z - \|z\|d \big\| \leq \delta \|z\| \|d\| \right\}.
\end{equation*}
Any set of this form will be referred to as a \emph{directional neighborhood} of $\bar{x}$ in $d$. When the parameters $\varepsilon$ and $\delta$ are immaterial, we simply write $\mathcal{V}(\bar{x}; d)$.
\end{definition}

Next, we recall the definitions of subdifferentials. Let $\varphi: X \to \overline{\mathbb{R}}:=\mathbb{R}\cup\{\pm\infty\}$ be proper, and let $\bar{x} \in \mathrm{dom}\, \varphi := \{x \in X \mid \varphi(x) < +\infty\}$. For any $\varepsilon \ge 0$, the \emph{$\varepsilon$-subdifferential} of $\varphi$ at $\bar{x}$ is defined by
\begin{equation*}
    \widehat{\partial}_{\varepsilon}\varphi(\bar{x}) := \left\{ x^* \in X^* \;\middle|\; \liminf_{x \to \bar{x}, \, x \neq \bar{x}} \frac{\varphi(x) - \varphi(\bar{x}) - \langle x^*, x - \bar{x} \rangle}{\|x - \bar{x}\|} \geq -\varepsilon \right\}.
\end{equation*}
When $\varepsilon = 0$, this set reduces to the \emph{Fr\'echet subdifferential}, denoted by $\widehat{\partial}\varphi(\bar{x})$. The \emph{Mordukhovich (limiting) subdifferential} $\partial \varphi(\bar{x})$ is defined via the normal cone to the epigraph:
\begin{equation*}
    \partial \varphi(\bar{x}) := \left\{ x^* \in X^* \;\middle|\; (x^*, -1) \in N\big((\bar{x}, \varphi(\bar{x})); \mathrm{epi}\, \varphi\big) \right\}. 
\end{equation*}

The \emph{directional Mordukhovich subdifferential} $\partial \varphi(\bar{x}; \bar{u})$ of $\varphi$ at $\bar{x} \in \mathrm{dom}\, \varphi$ in the direction $\bar{u} \in X$ is defined by
\begin{equation} \label{defn:dir_subdiff}
\begin{aligned}
    \partial \varphi(\bar{x}; \bar{u}) := \big\{ x^* \in X^* \;\mid\; &\exists\, \varepsilon_k \downarrow 0, \, t_k \downarrow 0, \, u_k \to \bar{u} \text{ s.t. } \bar{x} + t_k u_k \in \mathrm{dom}\,\varphi,  \\
    x_k^* \xrightarrow{w^*} x^*, \, &\varphi(\bar{x} + t_k u_k) \to \varphi(\bar{x}) \text{ and } x_k^* \in \widehat{\partial}_{\varepsilon_k}\varphi(\bar{x} + t_k u_k) \big\}.
\end{aligned}
\end{equation}

Furthermore, when $X$ is an Asplund space and $\varphi$ is lower semicontinuous around $\bar{x}$, the $\varepsilon_k$-dependence in the definition of $\partial \varphi(\bar{x}; \bar{u})$ can be omitted, similarly to the non-directional case in \cite[Theorem~2.34]{Mordukhovich2006variational}. 
We next introduce directional metric subregularity, which serves as our primary constraint qualification.

\begin{definition}\label{defn:directional_metric_subregularity}
    Let $F:X\rightrightarrows Y$ be a set-valued mapping between Banach spaces and $(\bar{x},\bar{y})\in\mathrm{gph}\,F$. We say that $F$ is \emph{metrically subregular at $(\bar{x},\bar{y})$ in the direction $u\in X$} whenever the usual error-bound estimate holds throughout some directional neighborhood: there exist $\varepsilon,\delta,\kappa>0$ such that
    \begin{equation} \label{eq:directional_ms}
        \operatorname{dist}\big(x, F^{-1}(\bar{y})\big) \leq \kappa \, \operatorname{dist}\big(\bar{y}, F(x)\big), \quad \forall x \in \mathcal{V}_{\varepsilon,\delta}(\bar{x}; u).
    \end{equation}
For $u=0$, this condition coincides with ordinary metric subregularity. 
\end{definition}

Next, we introduce directional Lipschitz properties for both single-valued and set-valued mappings. 

\begin{definition}[Directional Lipschitz continuity] \label{def:directional_lipschitz_single}
    A single-valued mapping $\varphi:X\to Y$ is \emph{directionally Lipschitz continuous} around $\bar{x}$ along $u$ if its restriction to some directional neighborhood $\mathcal{V}(\bar{x};u)$ is Lipschitz; that is, for some $L\geq0$,
    \begin{equation*}
        \|\varphi(x) - \varphi(z)\| \leq L \|x - z\|, \quad \forall\, x, z \in \mathcal{V}(\bar{x}; u).
    \end{equation*}
\end{definition}

\begin{definition}[Directional Lipschitz-like property] \label{def:directional_lipschitz_set}
    Let $F: X \rightrightarrows Y$ be a set-valued mapping with $\mathrm{dom}\, F \neq \emptyset$, and let $(\bar{x}, \bar{y}) \in \mathrm{gph}\, F$. Let $u \in X$ be a given direction. $F$ is said to be \emph{locally directionally Lipschitz-like} at $(\bar{x}, \bar{y})$ in the direction $u$ if there exist a neighborhood $V$ of $\bar{y}$, a constant $\ell \ge 0$, and a directional neighborhood $\mathcal{V}(\bar{x}; u)$ such that the following inclusion holds:
    \begin{equation} \label{eq:directional_lipschitz_inclusion}
        F(x) \cap V \subset F(z) + \ell \|x - z\| \mathbb{B}_Y, \quad \forall\, x, z \in \mathcal{V}(\bar{x}; u). 
    \end{equation}
\end{definition}

\begin{proposition} \label{prop:lipschitz_implication}
    Let $P: X \to Y$ and let $\Lambda \subset Y$ be a nonempty closed set. Define the set-valued mapping $F: X \rightrightarrows Y$ by $F(x) := P(x) - \Lambda$. Let $\bar{x} \in X$ and $u \in X$ be a given direction. If $P$ is directionally Lipschitz continuous around $\bar{x}$ in the direction $u$, then for any $\bar{y} \in F(\bar{x})$, the mapping $F$ is locally directionally Lipschitz-like at $(\bar{x}, \bar{y})$ in the direction $u$.
\end{proposition}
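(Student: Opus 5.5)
The plan is to verify the inclusion \eqref{eq:directional_lipschitz_inclusion} directly from Definition~\ref{def:directional_lipschitz_single}. By hypothesis there are a directional neighborhood $\mathcal{V}(\bar{x};u)$ and a constant $L\ge 0$ with $\|P(x)-P(z)\|\le L\|x-z\|$ for all $x,z\in\mathcal{V}(\bar{x};u)$. I will use this same directional neighborhood and take $\ell:=L$. The neighborhood of $\bar{y}$ can be arbitrary, so I choose $V:=Y$. Since the inclusion is then required for all of $F(x)$, it also holds for any smaller $V$.

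Fix $x,z\in\mathcal{V}(\bar{x};u)$ and $y\in F(x)$. By the definition of $F$, $y=P(x)-\lambda$ for some $\lambda\in\Lambda$. I decompose
\[
y = \bigl(P(z)-\lambda\bigr) + \bigl(P(x)-P(z)\bigr).
\]
The first term lies in $F(z)$ because the same $\lambda$ is used. The second term has norm at most $L\|x-z\|$, so it belongs to $L\|x-z\|\mathbb{B}_Y$. Hence $y\in F(z)+\ell\|x-z\|\mathbb{B}_Y$. Since $y\in F(x)$ was arbitrary, this gives $F(x)\cap V\subset F(z)+\ell\|x-z\|\mathbb{B}_Y$.

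The remaining requirements of Definition~\ref{def:directional_lipschitz_set} are immediate. The domain $\mathrm{dom}\,F\supset\{\bar{x}\}$ is nonempty because $\bar{y}\in F(\bar{x})$ is assumed, and $(\bar{x},\bar{y})\in\mathrm{gph}\,F$ holds by that same assumption. There is no real obstacle here: the translation structure $P(x)-\Lambda$ lets the same $\lambda$ be reused, so neither closedness of $\Lambda$ nor any projection argument is needed.
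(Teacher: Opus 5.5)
Your proposal is correct and matches the paper's proof essentially step for step: you reuse the same $\lambda\in\Lambda$ to write $y=(P(z)-\lambda)+(P(x)-P(z))$, take $\ell:=L$ on the same directional neighborhood, and note that the resulting inclusion for all of $F(x)$ survives intersection with any neighborhood $V$ of $\bar{y}$ (your choice $V:=Y$ is a valid instance). Your remark that closedness of $\Lambda$ is not needed is also accurate.
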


\begin{proof}
By definition, there exist a constant $L \ge 0$ and a directional neighborhood $\mathcal{V}(\bar{x}; u)$ such that
    \begin{equation} \label{eq:P_lipschitz}
        \|P(x) - P(z)\| \le L \|x - z\| \quad \text{for all } x, z \in \mathcal{V}(\bar{x}; u).
    \end{equation}
    
    Take any $x, z \in \mathcal{V}(\bar{x}; u)$ and choose an arbitrary $y \in F(x)$. By the definition of $F$, there exists $\lambda \in \Lambda$ such that $y = P(x) - \lambda$. Observe that $P(z) - \lambda \in P(z) - \Lambda = F(z)$. By adding and subtracting $P(z)$, we can express $y$ as:
    \begin{equation*}
        y = P(z) - \lambda + (P(x) - P(z)) \in F(z) + (P(x) - P(z)).
    \end{equation*}
    Utilizing the norm bound from \eqref{eq:P_lipschitz}, this directly yields:
    \begin{equation*}
        y \in F(z) + \|P(x) - P(z)\| \mathbb{B}_Y \subset F(z) + L \|x - z\| \mathbb{B}_Y.
    \end{equation*}
    
    Since $y \in F(x)$ was chosen arbitrarily, we obtain the global inclusion:
    \begin{equation*}
        F(x) \subset F(z) + L \|x - z\| \mathbb{B}_Y \quad \text{for all } x, z \in \mathcal{V}(\bar{x}; u).
    \end{equation*}
    
    This global inclusion remains valid when the left-hand side is intersected with any open neighborhood $V$ of $\bar{y}$. Thus, $F$ is directionally Lipschitz-like at $(\bar{x}, \bar{y})$ in the direction $u$ with modulus $\ell := L$. 
\end{proof}

\section{Directional Optimality Conditions}\label{sec:optimality_condition}

In this section, we establish necessary optimality conditions for the optimization problem \eqref{eq:basic_model}. We first present a foundational optimality result in the framework of Asplund spaces (Theorem~\ref{thm:preliminary_optimality_condition}), utilizing the directional coderivative under the assumption of directional metric subregularity. To render these abstract inclusions more applicable to functional constraints, we employ a decoupling lemma (Lemma~\ref{lem:normal_cone_G_representation}) and a directional scalarization theorem (Theorem~\ref{thm:scalarization}). These tools allow us to transform the geometric coderivative of the constraint mapping into the subdifferential of a scalarized functional, leading to a refined optimality condition (Theorem~\ref{thm:further_optimality_mscq}). 

\begin{definition}[Critical Direction] \label{defn:critical_direction} \cite[Definition~5]{gfrerer2013directional}
Let $\bar{x}$ be a feasible solution for \eqref{eq:basic_model}. A vector $u \in X$ is called a \emph{critical direction} of \eqref{eq:basic_model} at $\bar{x}$ if there exist sequences $t_k \downarrow 0$ and $u_k \to u$ such that 
\begin{equation} \label{eq:critical_direction_cond}
    \limsup_{k \to \infty} \frac{f(\bar{x} + t_k u_k) - f(\bar{x})}{t_k} \leq 0 \quad \text{and} \quad \lim_{k \to \infty} \frac{\operatorname{dist}\big(P(\bar{x} + t_k u_k), \Lambda\big)}{t_k} = 0.
\end{equation}
\end{definition}

\begin{remark}\label{rem:critical_direction_equivalence}
       The condition $\lim_{k \to \infty} t_k^{-1} \operatorname{dist}\big(P(\bar{x} + t_k u_k), \Lambda\big) = 0$ is closely tied to the graphical derivative condition. Specifically, if the limit of the difference quotients admits a vector $v$, it implies $v \in DP(\bar{x})(u) \cap T(P(\bar{x}); \Lambda)$, meaning $u$ belongs to the linearized cone $\mathbb{L}(\bar{x}) := \{ u \in X \mid DP(\bar{x})(u) \cap T(P(\bar{x}); \Lambda) \neq \emptyset \}$. 

      Since $f$ is assumed to be locally Lipschitzian, the first condition can be rewritten in terms of the upper Dini directional derivative as
\begin{equation*}
    f_+^\prime(\bar{x}; u) = \limsup_{k \to \infty} \frac{f(\bar{x} + t_k u) - f(\bar{x})}{t_k} \le 0.
\end{equation*}
\end{remark}

Next, we have the following directional necessary optimality condition.

\begin{theorem}\cite[Theorem~7~(ii)]{gfrerer2013directional}\label{thm:preliminary_optimality_condition}
Let $X$ and $Y$ be Asplund spaces. Let $\bar{x}$ be a local minimizer of \eqref{eq:basic_model}, and let $u \in X$ be a critical direction at $\bar{x}$. Assume that $G$ is metrically subregular in the direction $u$ at $(\bar{x}, 0)$. Then, there exists a multiplier $z^* \in Y^*$ such that
\begin{equation} \label{eq:m_stationarity}
    0 \in \partial f(\bar{x}; u) + D_N^* G\big((\bar{x}, 0); (u, 0)\big)(z^*).
\end{equation}
\end{theorem}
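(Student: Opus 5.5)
Our plan is the standard exact-penalization argument along the direction $u$, followed by the fuzzy sum rule in Asplund spaces and a limiting procedure; the multipliers stay bounded because of the penalty Lipschitz constant.

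\emph{Step 1 (directional penalization).} Let $\varepsilon,\delta,\kappa>0$ be the constants of directional metric subregularity from \eqref{eq:directional_ms}, and let $L_f$ be a Lipschitz constant of $f$ near $\bar x$. For $x\in\mathcal{V}_{\varepsilon,\delta}(\bar x;u)$ (shrunk so that $\bar x$ minimizes $f$ on the feasible set within it), pick a feasible $\tilde x$ with $\|x-\tilde x\|\le 2\kappa\operatorname{dist}(0,G(x))$. Then $f(x)+2L_f\kappa\operatorname{dist}(0,G(x))\ge f(\tilde x)\ge f(\bar x)$. Hence $\bar x$ minimizes
\[
\psi(x,y):=f(x)+\mu\|y\|+\delta_{\mathrm{gph}\,G}(x,y),\qquad \mu:=2L_f\kappa,
\]
over pairs with $x$ in the directional neighborhood, where we use $\operatorname{dist}(0,G(x))=\inf_{y\in G(x)}\|y\|$. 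If $u=0$, the conclusion is the classical limiting rule, so assume $u\neq0$.

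\emph{Step 2 (approximate minimizers along $u$).} Take $t_k,u_k$ from Definition~\ref{defn:critical_direction} and set $x_k:=\bar x+t_ku_k\in\mathcal{V}(\bar x;u)$. Choose $y_k\in G(x_k)$ with $\|y_k\|\le \operatorname{dist}(0,G(x_k))+t_k^2$. Criticality gives $\psi(x_k,y_k)\le \psi(\bar x,0)+o(t_k)$. We then apply Ekeland's variational principle to $\psi$ restricted to the closed set $\bar x+\{\|\cdot\|\le \varepsilon\}\cap$(closed cone around $u$) times $Y$, with parameter $\eta_k=o(t_k)$ and radius $\lambda_k=\sqrt{\eta_k t_k}=o(t_k)$. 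This yields $(\hat x_k,\hat y_k)\in\mathrm{gph}\,G$ such that $\|(\hat x_k,\hat y_k)-(x_k,y_k)\|=o(t_k)$ and $(\hat x_k,\hat y_k)$ minimizes $\psi+(\eta_k/\lambda_k)\|\cdot-(\hat x_k,\hat y_k)\|$. Since $\|y_k\|=o(t_k)$, we get $(\hat x_k-\bar x)/t_k\to u$ and $\hat y_k/t_k\to0$. In particular $\hat x_k$ lies in the interior of the directional cone (for $u\ne0$), so the localization constraint is inactive.

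\emph{Step 3 (fuzzy sum rule and limit).} Apply the Fréchet fuzzy sum rule in the Asplund space $X\times Y$ to the sum of the Lipschitz function $f(x)+\mu\|y\|+(\eta_k/\lambda_k)\|\cdot\|$ and the indicator of $\mathrm{gph}\,G$. This gives points $x_k'$ and $(x_k'',y_k'')\in\mathrm{gph}\,G$, within $o(t_k)$ of $(\hat x_k,\hat y_k)$, together with $x_k^*\in\widehat\partial f(x_k')$, $\|z_k^*\|\le\mu$, and $(w_k^*,-z_k^*)\in\widehat N((x_k'',y_k'');\mathrm{gph}\,G)$ (after absorbing small errors), satisfying $\|x_k^*+w_k^*\|\to0$. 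Because $f$ is Lipschitz, $\|x_k^*\|\le L_f$, and $\|z_k^*\|\le\mu$. Hence $\|w_k^*\|$ is bounded as well.

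Asplund spaces have weak$^*$ sequentially compact dual balls, so we may pass to subsequences with $x_k^*\xrightarrow{w^*}x^*$ and $(w_k^*,z_k^*)\xrightarrow{w^*}(-x^*,z^*)$. Since $(x_k'-\bar x)/t_k\to u$, we also have $f(x_k')\to f(\bar x)$, which gives $x^*\in\partial f(\bar x;u)$. Since $((x_k''-\bar x)/t_k,\,y_k''/t_k)\to(u,0)$, we also get $(-x^*,-z^*)\in N((\bar x,0);\mathrm{gph}\,G;(u,0))$, i.e.\ $-x^*\in D_N^*G((\bar x,0);(u,0))(z^*)$, which is \eqref{eq:m_stationarity}.

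The main obstacle is Step 2. The Ekeland perturbation and the fuzzy-rule points must stay within $o(t_k)$ of the curve $\bar x+t_ku_k$, so that the limits are genuinely directional in $u$ and not merely limiting normals. We must also keep the localization to the directional neighborhood from contributing a normal-cone term, which is why the point $\hat x_k$ must land in its interior. The choice $\lambda_k=o(t_k)$ and $\eta_k/\lambda_k\to0$, which is possible because $\eta_k=o(t_k)$, is what handles both. Note that the weak$^*$ limit $z^*$ may be zero; the statement allows this, so the vanishing-multiplier issue does not obstruct this theorem.
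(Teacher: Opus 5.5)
The paper gives no proof of this statement: it is quoted from \cite[Theorem~7~(ii)]{gfrerer2013directional} and used as a black box. Your argument is a correct self-contained proof, and it follows the standard route for directional multiplier rules:
\begin{itemize}
\item the directional exact penalty is lifted to $\mathrm{gph}\,G$ through the term $\mu\|y\|$;
\item Ekeland's principle is applied at the points $(x_k,y_k)$ of the critical curve with $\lambda_k=\sqrt{\eta_k t_k}=o(t_k)$ and $\eta_k/\lambda_k\to0$, so the perturbed minimizers keep the first-order behaviour $(u,0)$ and the localization stays inactive;
\item the fuzzy sum rule, together with weak$^*$ sequential compactness of bounded sets in $(X\times Y)^*$, produces the directional limits.
\end{itemize}
This is more elementary than the route the paper takes for the refined Theorem~\ref{thm:further_optimality_mscq}. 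That proof passes through $N(\bar x;\Omega;\bar u)$ via a directional sum rule and a preimage formula from \cite{mao2025directional}. You never form $N(\bar x;\Omega;u)$; you work directly with normals to $\mathrm{gph}\,G$ whose $Y^*$-components are bounded by $\mu+o(1)$. So you land on $D_N^*G$ with no external directional calculus and no PSNC-type assumption. As you note, the possibility $z^*=0$ is harmless here.

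A few details should be tightened, though none is a gap.
\begin{itemize}
\item In Step~1, pick $\tilde x$ with $\|x-\tilde x\|\le 2\min\{\kappa\operatorname{dist}(0,G(x)),\|x-\bar x\|\}$. This keeps $\tilde x$ in the ball where $f$ is $L_f$-Lipschitz and $\bar x$ is minimal. If the minimum is zero, $x$ is feasible because $\Lambda$ is closed, so take $\tilde x=x$.
\item Take $\eta_k>0$, for instance by adding $t_k^2$, so that Ekeland's principle applies.
\item Invoke the fuzzy sum rule in its multi-term form \cite[Theorem~2.33]{Mordukhovich2006variational} at the local minimizer, with tolerance $o(t_k)$. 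The Fr\'echet subdifferential of $f(x)+\mu\|y\|+(\eta_k/\lambda_k)\|\cdot-(\hat x_k,\hat y_k)\|$ need not split exactly. The separable part $f(x)+\mu\|y\|$ does split, by restricting to $y'=y$ or $x'=x$. The Ekeland term only adds an error of norm at most $\eta_k/\lambda_k\to0$.
\item The separate treatment of $u=0$ is unnecessary. In that case $\mathcal{V}_{\varepsilon,\delta}(\bar x;0)$ is a ball, $\hat x_k$ is automatically interior, and your argument applies verbatim.
\end{itemize}
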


By requiring metric subregularity only along a specific direction $u$, Theorem~\ref{thm:preliminary_optimality_condition} accommodates problems where standard constraint qualifications fail. However, the directional limiting coderivative appearing in \eqref{eq:m_stationarity} is difficult to calculate in practice, which motivates a deeper investigation into its explicit structure. 

To clarify the structure of $D_N^* G\big((\bar{x}, 0); (u, 0)\big)(z^*)$ and to facilitate the verification of the directional metric subregularity in Theorem~\ref{thm:preliminary_optimality_condition}, we establish the following decoupling lemma. This result extends the finite-dimensional decoupling relation found in \cite[Lemma~3.2]{bai2019directional} to Asplund spaces.

\begin{lemma} \label{lem:normal_cone_G_representation}
Let $P:X\to Y$ be continuous at $x$, and let $G: X \rightrightarrows Y$ be defined by $G(x) := P(x) - \Lambda$. Let $(x, y) \in \mathrm{gph}\, G$ and $s \in \Lambda$ satisfy $y = P(x) - s$. If $(x^*, -y^*) \in \widehat{N}\left((x, y); \mathrm{gph}\, G\right)$, then
\begin{equation} \label{eq:decoupling_inclusion}
    x^* \in \widehat{D}_N^* P(x)(y^*) \quad \text{and} \quad y^* \in \widehat{N}(s; \Lambda).
\end{equation}
\end{lemma}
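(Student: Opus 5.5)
The plan is to test the Fréchet normal inequality defining $(x^*,-y^*)\in\widehat{N}((x,y);\mathrm{gph}\,G)$ along two families of graph points: one that moves only $x$, and one that moves only the $\Lambda$-component. The key observation is that $(x',y')\in\mathrm{gph}\,G$ if and only if $y'=P(x')-s'$ for some $s'\in\Lambda$. By hypothesis,
\[
\limsup_{(x',y')\xrightarrow{\mathrm{gph}\,G}(x,y)}\frac{\langle x^*,x'-x\rangle-\langle y^*,y'-y\rangle}{\|x'-x\|+\|y'-y\|}\le 0 .
\]

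For the second inclusion, fix $x'=x$ and take $s'\in\Lambda$ with $s'\to s$, $s'\neq s$. Then $y'=P(x)-s'\in G(x)$, so $(x,y')\in\mathrm{gph}\,G$. We have $y'-y=s-s'$, so $\|y'-y\|=\|s'-s\|\to0$. Substituting gives
\[
\limsup_{s'\xrightarrow{\Lambda}s}\frac{\langle y^*,s'-s\rangle}{\|s'-s\|}\le0,
\]
which is exactly $y^*\in\widehat{N}(s;\Lambda)$.

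For the first inclusion, fix $s'=s$ and let $x'\to x$ be arbitrary. Then $y'=P(x')-s$ satisfies $(x',y')\in\mathrm{gph}\,G$, and $y'-y=P(x')-P(x)$. Continuity of $P$ at $x$ guarantees $(x',y')\to(x,y)$, so these points are admissible in the $\limsup$. This gives
\[
\limsup_{x'\to x}\frac{\langle x^*,x'-x\rangle-\langle y^*,P(x')-P(x)\rangle}{\|x'-x\|+\|P(x')-P(x)\|}\le0 .
\]
Since $\mathrm{gph}\,P=\{(x',P(x'))\}$, this is precisely the statement $(x^*,-y^*)\in\widehat{N}((x,P(x));\mathrm{gph}\,P)$, that is, $x^*\in\widehat{D}_N^*P(x)(y^*)$.

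The only delicate point is normalization. In the definition, the quotient uses the product norm of $(x'-x,\,y'-y)$; I will fix the sum norm on $X\times Y$, and any equivalent norm changes the quotient only by a bounded positive factor, which preserves the sign of the $\limsup$. Along each family the product-norm increment coincides with the natural one, namely $\|s'-s\|$ in the first case and $\|(x'-x,\,P(x')-P(x))\|$ in the second. I therefore expect no genuine obstacle, and continuity of $P$ is used only to ensure that the points tested in the second case converge in the graph.
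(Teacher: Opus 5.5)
Your proposal is correct and matches the paper's proof: test the Fréchet normal inequality at $(x,P(x)-s')$ with $s'\in\Lambda$ near $s$ to obtain $y^*\in\widehat{N}(s;\Lambda)$, and at $(x',P(x')-s)$ with $x'$ near $x$ (using continuity of $P$) to obtain $(x^*,-y^*)\in\widehat{N}((x,P(x));\mathrm{gph}\,P)$. The only difference is presentation: you work with the $\limsup$ quotient, while the paper fixes $\varepsilon>0$ and a neighborhood $U$ of $(x,y)$ on which the $\varepsilon$-inequality holds.
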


\begin{proof}
Fix $\varepsilon>0$. By the definition of the Fr\'echet normal cone, there exists a neighborhood $U$ of $(x,y)$ such that
\begin{equation} \label{eq:frechet_normal_G_def}
    \langle x^*, x' - x \rangle + \langle -y^*, y' - y \rangle \leq \varepsilon \|(x' - x, y' - y)\|, \quad \forall (x', y') \in \mathrm{gph}\, G \cap U.
\end{equation}
To prove the second inclusion in \eqref{eq:decoupling_inclusion}, we choose $x' = x$ and $y' = P(x) - s'$ for any $s' \in \Lambda$. When $\|s' - s\|$ is sufficiently small, $(x, y') \in \mathrm{gph}\, G \cap U$. Substituting these into \eqref{eq:frechet_normal_G_def}, we obtain:
\begin{equation*}
    \langle -y^*, (P(x) - s') - (P(x) - s) \rangle \leq \varepsilon \|s' - s\| \iff \langle y^*, s' - s \rangle \leq \varepsilon \|s' - s\|,
\end{equation*}
which implies $y^* \in \widehat{N}(s; \Lambda) = \widehat{N}(P(x) - y; \Lambda)$.

Next, to establish the first inclusion, take an arbitrary $x' \in X$ near $x$ and set $y' := P(x') - s$. Since $s \in \Lambda$, we have $y' \in G(x')$ and thus $(x', y') \in \mathrm{gph}\, G$. Applying \eqref{eq:frechet_normal_G_def} again yields:
\begin{equation*}
    \langle x^*, x' - x \rangle + \langle -y^*, P(x') - P(x) \rangle \leq \varepsilon \|(x' - x, P(x') - P(x))\|.
\end{equation*}
By definition, this means that $(x^*, -y^*) \in \widehat{N}((x, P(x)); \mathrm{gph}\, P)$, which is equivalent to $x^* \in \widehat{D}_N^* P(x)(y^*)$.
\end{proof}

Lemma~\ref{lem:normal_cone_G_representation} provides a useful necessary decoupling relation for the Fr\'echet normal cone to the graph of $G$. It shows that any normal pair $(x^*, -y^*)$ to $\mathrm{gph}\, G$ induces a coderivative element of $P$ and a normal to the constraint set $\Lambda$. To further describe coderivatives via scalarization, we introduce the following concept. 

\begin{definition}[Weak\(^*\) strict Lipschitz property {\cite[Definition~3.25]{Mordukhovich2006variational}}] \label{defn:weak_strict_lip}
Let $\phi:X\to Y$ be Lipschitz continuous around $\bar{x}$. For $u\in X$, $x_k\to\bar{x}$, and $t_k\downarrow0$, form the difference quotients $q_k:=\frac{\phi(x_k+t_ku)-\phi(x_k)}{t_k}$. 
The mapping $\phi$ is called \emph{weak\(^*\) strictly Lipschitzian} at $\bar{x}$ if $\langle y_k^*,q_k\rangle\to0$ for every such choice of $u$, $\{x_k\}$, and $\{t_k\}$ and every sequence $y_k^*\xrightarrow{w^*}0$ in $Y^*$.
\end{definition}

Notably, this class of mappings includes Fredholm integral operators with Lipschitzian kernels, which are particularly important in applications to optimal control \cite[p.~288]{Mordukhovich2006variational}. In finite-dimensional settings (i.e., $\dim Y < \infty$), it is automatically satisfied and reduces to classical local Lipschitz continuity. 

Next, we present a directional scalarization formula for the coderivative $D_N^* \phi$. When $\phi$ is locally Lipschitz, this formula generalizes the classical nondirectional counterpart established in \cite[Theorem~3.28]{Mordukhovich2006variational}. 

\begin{theorem} \label{thm:scalarization}
Let $X,Y$ be Asplund spaces. Suppose that the mapping $\phi: X \to Y$ is locally Lipschitz and weak\(^*\) strictly Lipschitzian at $\bar{x} \in X$. If $\phi$ is Hadamard directionally differentiable at $\bar{x}$ in a given direction $\bar{u} \in X$, then, for any $y^* \in Y^*$, we have
\begin{equation} \label{eq:scalarization_formula}
    D_N^* \phi(\bar{x}; \bar{u})(y^*) = \partial \langle y^*, \phi \rangle (\bar{x}; \bar{u}).
\end{equation}
\end{theorem}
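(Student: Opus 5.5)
We prove the two inclusions in \eqref{eq:scalarization_formula} separately, following the scheme of \cite[Theorem~3.28]{Mordukhovich2006variational} but tracking the base points along the direction $\bar u$. Throughout we use the Asplund-space simplifications of the directional normal cone and subdifferential, which allow $\varepsilon_k=0$. We write $\bar v:=\phi'_H(\bar{x};\bar{u})$ and let $L$ be a Lipschitz constant of $\phi$ near $\bar{x}$.

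\emph{Inclusion ``$\supset$''.} Let $x^*\in\partial\langle y^*,\phi\rangle(\bar{x};\bar{u})$. Then there are $t_k\downarrow0$, $u_k\to\bar u$ and $x_k^*\in\widehat{\partial}\langle y^*,\phi\rangle(x_k)$ with $x_k:=\bar{x}+t_ku_k$ and $x_k^*\xrightarrow{w^*}x^*$. The elementary Fr\'echet scalarization inclusion $\widehat{\partial}\langle y^*,\phi\rangle(x_k)\subset\widehat{D}_N^*\phi(x_k)(y^*)$ holds for Lipschitz $\phi$: from the lower estimate of $\langle y^*,\phi\rangle$ one gets $\langle x^*_k,x'-x_k\rangle-\langle y^*,\phi(x')-\phi(x_k)\rangle\le\varepsilon\|x'-x_k\|$, and $\|x'-x_k\|$ is dominated by the graph norm. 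Hence $(x_k^*,-y^*)\in\widehat N((x_k,\phi(x_k));\mathrm{gph}\,\phi)$. Moreover $(x_k,\phi(x_k))=(\bar{x},\phi(\bar{x}))+t_k\bigl(u_k,v_k\bigr)$ with $v_k:=(\phi(x_k)-\phi(\bar{x}))/t_k\to\bar v$ by Hadamard differentiability. Passing to the limit gives $(x^*,-y^*)\in N((\bar{x},\phi(\bar{x}));\mathrm{gph}\,\phi;(\bar u,\bar v))$. This direction needs neither weak$^*$ strictness nor Asplundness beyond the stated simplification.

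\emph{Inclusion ``$\subset$''.} Let $x^*\in D_N^*\phi(\bar{x};\bar{u})(y^*)$. Then there are $t_k\downarrow0$, $(u_k,v_k)\to(\bar u,\bar v)$ and $(x_k^*,-y_k^*)\in\widehat N((x_k,\phi(x_k));\mathrm{gph}\,\phi)$ with $x_k=\bar{x}+t_ku_k$ and $(x_k^*,y_k^*)\xrightarrow{w^*}(x^*,y^*)$. Note that $\phi(x_k)=\phi(\bar{x})+t_kv_k$ automatically, since graph points are forced. The key step is the Asplund fuzzy scalarization \cite[Theorem~1.38 / Lemma~3.27 chain]{Mordukhovich2006variational}: a Fr\'echet coderivative element $x_k^*\in\widehat D^*_N\phi(x_k)(y_k^*)$ of a Lipschitz map yields, for any $\eta_k\downarrow0$, points $\tilde x_k$ with $\|\tilde x_k-x_k\|\le\eta_kt_k$ and elements $\tilde x_k^*\in\widehat\partial\langle y_k^*,\phi\rangle(\tilde x_k)$ such that $\|\tilde x_k^*-x_k^*\|\le\eta_k$. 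Choosing the radius as $\eta_kt_k$ keeps $\tilde u_k:=(\tilde x_k-\bar{x})/t_k\to\bar u$, which is the point where the directional refinement enters.

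Next we swap $y_k^*$ for the fixed $y^*$. We have $\widehat\partial\langle y_k^*,\phi\rangle(\tilde x_k)\subset\widehat\partial\langle y^*,\phi\rangle(\tilde x_k)+\widehat\partial\langle y_k^*-y^*,\phi\rangle(\tilde x_k)$ only in fuzzy form, so we apply the fuzzy sum rule in Asplund spaces. This gives nearby points $\hat x_k$, again within $\eta_kt_k$, together with $a_k^*\in\widehat\partial\langle y^*,\phi\rangle(\hat x_k)$ and $b_k^*\in\widehat\partial\langle y_k^*-y^*,\phi\rangle(\hat x_k')$ satisfying $\tilde x_k^*\approx a_k^*+b_k^*$. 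The main obstacle is showing $b_k^*\xrightarrow{w^*}0$. Since $\phi$ is Lipschitz, the $b_k^*$ are bounded, so for each $u$ we have $\langle b_k^*,u\rangle\le\frac{\langle y_k^*-y^*,\phi(\hat x_k'+su)-\phi(\hat x_k')\rangle}{s}+o(1)$ for suitably small $s=s_k\downarrow0$. The weak$^*$ strict Lipschitz property (Definition~\ref{defn:weak_strict_lip}), applied with $y_k^*-y^*\xrightarrow{w^*}0$, makes the right-hand side tend to $0$. Applying this to $\pm u$ gives $\langle b_k^*,u\rangle\to0$, so $b_k^*\xrightarrow{w^*}0$ by boundedness. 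Consequently $a_k^*\xrightarrow{w^*}x^*$ at base points $\bar{x}+t_k\hat u_k$ with $\hat u_k\to\bar u$, and the values $\langle y^*,\phi(\hat x_k)\rangle$ converge by continuity. This places $x^*$ in $\partial\langle y^*,\phi\rangle(\bar{x};\bar{u})$.
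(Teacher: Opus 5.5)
Your proof is correct and is essentially the intended argument. The paper defers the proof to \cite[Theorem~3.10]{mao2025directional} as a directional version of \cite[Theorem~3.28]{Mordukhovich2006variational}, and your scheme is exactly that adaptation: Fr\'echet scalarization, the Asplund fuzzy sum rule with radius $\eta_k t_k$ so that the base points stay of the form $\bar{x}+t_k\hat u_k$ with $\hat u_k\to\bar u$, and weak$^*$ strict Lipschitzness to force $b_k^*\xrightarrow{w^*}0$. One simplification: your ``fuzzy scalarization'' step is in fact exact for Lipschitz $\phi$, namely $\widehat D_N^*\phi(x)(y^*)=\widehat\partial\langle y^*,\phi\rangle(x)$ by the same graph-norm comparison you used for ``$\supset$'', so you can simply take $\tilde x_k=x_k$ and $\tilde x_k^*=x_k^*$.
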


The proof is detailed in \cite[Theorem 3.10]{mao2025directional}. 
Theorem~\ref{thm:scalarization} establishes an exact directional scalarization formula for the basic coderivative in Asplund spaces. This precise relationship simplifies the characterization of $D_N^* \phi$ by facilitating the scalar subdifferential calculus. 

\begin{remark} \label{rem:optimality_restated} 
    Let $z^* \in Y^*$ be the multiplier obtained in \eqref{eq:m_stationarity}, and consider $(x^*, -z^*) \in N\big((\bar{x}, 0); \mathrm{gph}\, G; (u, 0)\big)$, with $P$ Hadamard directionally differentiable at $\bar{x}$ in the direction $u$. By the definition of the directional limiting normal cone, there exist sequences $t_k \downarrow 0$, $u_k \to u$, $v_k \to 0$ with $y_k := t_k v_k \in G(x_k)$, and dual sequences $x_k^* \xrightarrow{w^*} x^*$, $z_k^* \xrightarrow{w^*} z^*$ such that
    \begin{equation*}
        (x_k^*, -z_k^*) \in \widehat{N}\left((x_k, y_k); \mathrm{gph}\, G\right), \quad \text{where } x_k := \bar{x} + t_k u_k.
    \end{equation*}
    Since $y_k \in G(x_k) = P(x_k) - \Lambda$, there exists a sequence $s_k \in \Lambda$ such that $y_k = P(x_k) - s_k$. Applying Lemma~\ref{lem:normal_cone_G_representation} to the normal inclusion above gives
    \begin{equation} \label{eq:frechet_decoupling}
        x_k^* \in \widehat{D}_N^* P(x_k)(z_k^*) \quad \text{and} \quad z_k^* \in \widehat{N}(s_k; \Lambda), \quad \text{with } s_k = P(x_k) - y_k.
    \end{equation}
    Observe the asymptotic behavior of $s_k$:
    \begin{equation*}
        \frac{s_k - P(\bar{x})}{t_k} = \frac{P(x_k) - P(\bar{x})}{t_k} - v_k \to P'_H(\bar{x}; u).
    \end{equation*}
    Taking the limit as $k \to \infty$, the sequential closedness of the directional limiting normal cone applied to the second inclusion in \eqref{eq:frechet_decoupling} yields $z^* \in N\big(P(\bar{x}); \Lambda; P'_H(\bar{x}; u)\big)$. Concurrently, the first inclusion in \eqref{eq:frechet_decoupling} passes to the directional limiting coderivative $x^* \in D_N^* P\big(\bar{x}; (u, P'_H(\bar{x}; u))\big)(z^*)$. 
\end{remark}

Next, we present the following theorem as an extension of \cite[Theorem~7]{gfrerer2013directional} in Asplund spaces. 

\begin{theorem} \label{thm:further_optimality_mscq}
    Let $X$ and $Y$ be Asplund spaces. Let $\bar{x}$ be a local minimizer of \eqref{eq:basic_model} and $\bar{u} \in X$ be a critical direction at $\bar{x}$. Assume that $G$ is metrically subregular in the direction $\bar{u}$ at $(\bar{x}, 0)$. Furthermore, assume that $P$ is weak\(^*\) strictly Lipschitzian at $\bar{x}$ and Hadamard directionally differentiable at $\bar{x}$ in the direction $\bar{u}$ with $\bar{v} := P'_H(\bar{x}; \bar{u}) \in T(P(\bar{x}); \Lambda)$. Then, there exists a multiplier $z^* \in N\big(P(\bar{x}); \Lambda; \bar{v}\big)$ such that
    \begin{equation} \label{eq:refined_optimality}
        0 \in \partial f(\bar{x}; \bar{u}) + D_N^* P\big(\bar{x}; (\bar{u}, \bar{v})\big)(z^*) = \partial f(\bar{x}; \bar{u}) + \partial \langle z^*, P \rangle (\bar{x}; \bar{u}).
    \end{equation}
\end{theorem}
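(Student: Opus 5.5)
The plan is to chain Theorem~\ref{thm:preliminary_optimality_condition}, Remark~\ref{rem:optimality_restated}, and Theorem~\ref{thm:scalarization}.

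First, the hypotheses of Theorem~\ref{thm:preliminary_optimality_condition} hold verbatim: $X,Y$ are Asplund, $\bar{x}$ is a local minimizer, $\bar{u}$ is critical, and $G$ is metrically subregular in direction $\bar{u}$ at $(\bar{x},0)$. That theorem therefore yields $z^*\in Y^*$ and $x^*\in D_N^*G\big((\bar{x},0);(\bar{u},0)\big)(z^*)$ with $-x^*\in\partial f(\bar{x};\bar{u})$. In other words, $(x^*,-z^*)\in N\big((\bar{x},0);\mathrm{gph}\,G;(\bar{u},0)\big)$.

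Second, I would invoke Remark~\ref{rem:optimality_restated} with this pair. Since $X\times Y$ is Asplund and $\mathrm{gph}\,G$ is closed ($P$ continuous, $\Lambda$ closed), the $\varepsilon$-free representation of the directional normal cone applies. This gives sequences $t_k\downarrow0$, $u_k\to\bar{u}$, $v_k\to0$, and $(x_k^*,-z_k^*)\in\widehat N((x_k,t_kv_k);\mathrm{gph}\,G)$ converging weak$^*$ to $(x^*,-z^*)$. Lemma~\ref{lem:normal_cone_G_representation} decouples these into
\[
x_k^*\in\widehat D_N^*P(x_k)(z_k^*),\qquad z_k^*\in\widehat N(s_k;\Lambda),\qquad s_k=P(x_k)-t_kv_k\in\Lambda .
\]
By Hadamard differentiability, $(s_k-P(\bar{x}))/t_k\to\bar{v}$, so $z^*\in N(P(\bar{x});\Lambda;\bar{v})$. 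The inclusion $\bar{v}\in T(P(\bar{x});\Lambda)$ is consistent with this, since the directional cone is otherwise empty.

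Third, for the coderivative part, write $(x_k,P(x_k))=(\bar{x},P(\bar{x}))+t_k\bigl(u_k,(P(x_k)-P(\bar{x}))/t_k\bigr)$. The second component tends to $\bar{v}$, so the pairs $(x_k^*,-z_k^*)\in\widehat N((x_k,P(x_k));\mathrm{gph}\,P)$ give $x^*\in D_N^*P(\bar{x};(\bar{u},\bar{v}))(z^*)$, which is the first form of \eqref{eq:refined_optimality}. The equality with $\partial\langle z^*,P\rangle(\bar{x};\bar{u})$ then follows directly from Theorem~\ref{thm:scalarization}. Its hypotheses are $P$ locally Lipschitz, weak$^*$ strictly Lipschitzian, and Hadamard directionally differentiable in direction $\bar{u}$, where local Lipschitzness is part of Definition~\ref{defn:weak_strict_lip}. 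Moreover $D_N^*P(\bar{x};(\bar{u},\bar{v}))=D_N^*P(\bar{x};\bar{u})$ because $\bar{v}=P'_H(\bar{x};\bar{u})$.

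The main obstacle is the weak$^*$ passage to the limit. Weak$^*$ convergence of the product sequence must give weak$^*$ convergence of each component, and the Fréchet normals must be taken exactly at graph points, with no $\varepsilon$-enlargement; both require care in infinite dimensions. I would first check that bounded nets are not needed. The directional limiting cones are defined through sequential weak$^*$ outer limits, so the sequences produced by the definition suffice and no extraction of subsequences is required. The Asplund property enters only to drop the $\varepsilon_k$, through \cite[Theorem~2.34]{Mordukhovich2006variational}.
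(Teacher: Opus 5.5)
Your argument is correct, but it takes a different route from the paper's proof.

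\textbf{Your route.} You start from Theorem~\ref{thm:preliminary_optimality_condition}, which is Gfrerer's condition stated with $D_N^*G$. You then carry out the decoupling sketched in Remark~\ref{rem:optimality_restated}:
\begin{itemize}
\item the $\varepsilon$-free representation of the directional normal cone for the closed set $\mathrm{gph}\,G$;
\item Lemma~\ref{lem:normal_cone_G_representation} applied at each $k$;
\item Hadamard differentiability to identify the limiting direction $(\bar u,\bar v)$ for both $\mathrm{gph}\,P$ and $\Lambda$.
\end{itemize}

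\textbf{The paper's route.} The paper never passes through $\mathrm{gph}\,G$. It works on the feasible set $\Omega=P^{-1}(\Lambda)$:
\begin{itemize}
\item a directional Fermat rule for $f+\delta_\Omega$;
\item the directional sum rule \cite[Theorem~2.13]{mao2025directional};
\item a preimage formula \cite[Theorem~2.1]{mao2025directional}, which gives $N(\bar x;\Omega;\bar u)\subset D_N^*P(\bar x;(\bar u,\bar v))(N(P(\bar x);\Lambda;\bar v))$ under directional metric subregularity.
\end{itemize}
The final scalarization step through Theorem~\ref{thm:scalarization} is the same in both proofs.

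\textbf{What your version buys.}
\begin{itemize}
\item It is more self-contained: apart from Gfrerer's theorem, it uses only results stated in the paper, and the limit passage is elementary and explicit.
\item It shows that the hypothesis $\bar v\in T(P(\bar x);\Lambda)$ is redundant. It follows automatically from $s_k\in\Lambda$ and $(s_k-P(\bar x))/t_k\to\bar v$.
\item It avoids two points the paper's proof treats only briefly. First, the directional Fermat rule needs $\bar u$ to be critical for $f+\delta_\Omega$ along feasible sequences. Definition~\ref{defn:critical_direction} allows infeasible points $\bar x+t_ku_k$, so this must be extracted from metric subregularity. Second, the qualification condition for the sum rule has to be checked.
\end{itemize}

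\textbf{What the paper's version buys.} It is modular. It isolates an estimate of the directional normal cone to the feasible set that does not depend on the objective, and that estimate can be reused for other objectives or constraint structures.
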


\begin{proof}
    Let $\Omega := P^{-1}(\Lambda)$ denote the feasible region. Since $\bar{x}$ is a local minimizer of \eqref{eq:basic_model}, it is a local minimizer of the unconstrained objective function $f + \delta_\Omega$. By the generalized Fermat's rule for directional subdifferentials, we have
    \begin{equation*}
        0 \in \partial (f + \delta_\Omega)(\bar{x}; \bar{u}).
    \end{equation*}
    Since $f$ is locally Lipschitz and the underlying spaces are Asplund, the regularity condition is satisfied (also guaranteed by the directional metric subregularity for the indicator function). We can thus apply the directional sum rule \cite[Theorem 2.13]{mao2025directional} to separate the subdifferential, which yields
    \begin{equation} \label{eq:sum_rule_applied}
        \partial(f + \delta_\Omega)(\bar{x}; \bar{u}) \subset \partial f(\bar{x}; \bar{u}) + \partial \delta_\Omega(\bar{x}; \bar{u}) = \partial f(\bar{x}; \bar{u}) + N(\bar{x}; \Omega; \bar{u}).
    \end{equation}
 
    Applying the directional normal cone representation for preimages \cite[Theorem~2.1]{mao2025directional} under the assumed directional metric subregularity of $G$, we can evaluate the normal cone to the feasible set $\Omega$:
    \begin{equation} \label{eq:chain_rule_preimage}
        N(\bar{x}; \Omega; \bar{u}) \subset D_N^* P\big(\bar{x}; (\bar{u}, \bar{v})\big) \big( N(P(\bar{x}); \Lambda; \bar{v}) \big).
    \end{equation}
    Combining the inclusions \eqref{eq:sum_rule_applied} and \eqref{eq:chain_rule_preimage}, there must exist a multiplier $z^* \in N(P(\bar{x}); \Lambda; \bar{v})$ such that 
    \begin{equation*}
        0 \in \partial f(\bar{x}; \bar{u}) + D_N^* P\big(\bar{x}; (\bar{u}, \bar{v})\big)(z^*). 
    \end{equation*}

    Finally, since $P$ is weak\(^*\) strictly Lipschitzian at $\bar{x}$, \eqref{eq:refined_optimality} follows directly from the scalarization formula established in Theorem~\ref{thm:scalarization}. This allows us to represent the directional limiting coderivative in terms of the subdifferential of the scalarized mapping $\partial \langle z^*, P \rangle (\bar{x}; \bar{u})$, completing the proof.   
\end{proof}

Theorem~\ref{thm:further_optimality_mscq} provides a refined optimality condition by explicitly coupling the variation of $P$ with the direction $\bar{u}$. Unlike non-directional variants, the inclusion of $\bar{v} = P'_H(\bar{x}; \bar{u})$ guarantees that the multiplier $z^*$ is selected from the narrower directional normal cone $N(P(\bar{x}); \Lambda; \bar{v})$, reflecting the fact that only those constraints active along the direction $\bar{v}$ contribute to stationarity.

\section{Sufficient Conditions for Directional Metric Subregularity} \label{sec:sufficient_condition_for_mscq}

Up to now, we have characterized directional optimality conditions for \eqref{eq:basic_model} under the assumption of directional metric subregularity. However, in many practical scenarios, verifying directional metric subregularity directly can be quite challenging. In this section, we aim to establish several sufficient conditions for directional metric subregularity. 

\begin{definition}[Directional Pseudo-normality] \label{defn:directional_pseudo_normality} 
    Suppose that $P(\bar{x}) \in \Lambda$. Directional pseudo-normality is said to hold at $\bar{x}$ in a given direction $u$ if for every $v \in DP(\bar{x})(u) \cap T(P(\bar{x}); \Lambda)$,  there exists no multiplier $y^* \in Y^* \setminus \{0\}$ such that the following three conditions hold simultaneously:
    \begin{enumerate}
        \item[(i)] $0 \in D_N^* P\big(\bar{x}; (u,v)\big)(y^*)$ and $y^* \in N(P(\bar{x}); \Lambda; v)$;
        \item[(ii)] there exist sequences $t_k \downarrow 0$, $u_k \to u$, $s_k \to P(\bar{x})$, and $y_k^* \xrightarrow{w^*} y^*$ satisfying $y_k^* \in \widehat{N}(s_k; \Lambda)$;
        \item[(iii)] for all sufficiently large $k$, it holds that
        \begin{equation}\label{eq:directional_pseudo_normality}
            \langle y_k^*, P(\bar{x} + t_k u_k) - s_k \rangle > 0.  
        \end{equation}
    \end{enumerate}
\end{definition}

\begin{remark}
    Our definition of directional pseudo-normality differs from its finite-dimensional counterpart. When $Y$ is finite-dimensional, the strict inequality in \eqref{eq:directional_pseudo_normality} is conventionally formulated using the limit multiplier $y^*$ rather than the sequence $y_k^*$ \cite[Definition~4.1]{bai2019directional}, because weak\(^*\) convergence and norm convergence coincide. In infinite-dimensional spaces, weak\(^*\) convergence alone does not preserve a strict inequality involving a varying primal sequence. We therefore formulate the condition directly with $y_k^*$. No general implication between this sequential sign condition and the corresponding condition written with $y^*$ should be inferred without additional assumptions.
\end{remark}

When the image space is finite-dimensional, we record the corresponding notion, which follows \cite[Definition~4.1]{bai2019directional}.

\begin{definition}[Directional Quasi-Normality] \label{defn:directional_quasi_normality}
    Suppose that $P(\bar{x}) \in \Lambda$ and that $Y=\mathbb{R}^d$. Directional quasi-normality is said to hold at $\bar{x}$ in a given direction $u$ if for every $v \in DP(\bar{x})(u) \cap T(P(\bar{x}); \Lambda)$, there exists no multiplier $y^* \in Y^* \setminus \{0\}$ such that the following three conditions hold simultaneously:
    \begin{enumerate}
        \item[(i)] $0 \in D_N^* P\big(\bar{x}; (u,v)\big)(y^*)$ and $y^* \in N(P(\bar{x}); \Lambda; v)$;
        \item[(ii)] there exist sequences $t_k \downarrow 0$, $u_k \to u$, $s_k \to P(\bar{x})$, and $y_k^* \to y^*$ satisfying $y_k^* \in \widehat{N}(s_k; \Lambda)$;
        \item[(iii)] for all sufficiently large $k$, it holds that
        \begin{equation}\label{eq:directional_quasi_normality}
            [y^*]_i[P(\bar{x} + t_k u_k) - s_k]_i> 0,\; i=1,\dots,d, \text{ if } [y^*]_i\neq 0.
        \end{equation}
    \end{enumerate}
\end{definition}

The following lemma gives a sequential sufficient condition for directional metric subregularity. It extends the contradiction criterion in \cite[Lemma~3.1]{bai2019directional}.

\begin{lemma} \label{lem:sequential_ms_characterization}
Let $G: X \rightrightarrows Y$ be a set-valued mapping with a closed graph between Asplund spaces $X$ and $Y$, and let $(\bar{x}, \bar{y}) \in \mathrm{gph}\, G$. Assume that for a given direction $u \in X$, there do not exist sequences $t_k \downarrow 0$, $\|(u_k, v_k)\| = 1$, and $\|y_k^*\| = 1$ with $\|u_k\| \to 1$, $\|u\|u_k \to u$, $v_k \to 0$, and $x_k^* \to 0$ such that
\begin{equation} \label{eq:sequential_ms_condition}
    (x_k^*, -y_k^*) \in \widehat{N}\big((x_k', y_k'); \mathrm{gph}\, G\big), \quad x_k' \notin G^{-1}(\bar{y}), \quad \lim_{k\to \infty} \frac{\langle y_k^*, y_k' - \bar{y} \rangle}{\|y_k' - \bar{y}\|} = 1,
\end{equation}
where $x_k' := \bar{x} + t_k u_k \neq \bar{x}$ and $y_k' := \bar{y} + t_k v_k \neq \bar{y}$. Then $G$ is metrically subregular at $(\bar{x}, \bar{y})$ in the direction $u$.
\end{lemma}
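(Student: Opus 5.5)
We argue by contraposition: if $G$ fails to be metrically subregular at $(\bar{x},\bar{y})$ in the direction $u$, we construct sequences with all the properties in \eqref{eq:sequential_ms_condition}. Failure of subregularity means that for each $k$ there is some $x_k\in\mathcal{V}_{1/k,1/k}(\bar{x};u)$ with
\[
\operatorname{dist}(x_k,G^{-1}(\bar{y}))>k\,\operatorname{dist}(\bar{y},G(x_k)).
\]
In particular $x_k\notin G^{-1}(\bar{y})$, $x_k\to\bar{x}$, and $(x_k-\bar{x})/\|x_k-\bar{x}\|$ approaches $u/\|u\|$ when $u\neq 0$. We may assume $G(x_k)\neq\emptyset$, since otherwise the point is irrelevant after localization. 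Choose $y_k\in G(x_k)$ with $\|y_k-\bar{y}\|<\operatorname{dist}(x_k,G^{-1}(\bar{y}))/k$. Then $\|y_k-\bar{y}\|=o(\|x_k-\bar{x}\|)$, because $\operatorname{dist}(x_k,G^{-1}(\bar{y}))\le\|x_k-\bar{x}\|$.

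Next we apply Ekeland's variational principle to the lower semicontinuous function
\[
\varphi(x,y):=\|y-\bar{y}\|+\delta_{\mathrm{gph}\,G}(x,y)
\]
on the complete space $\mathrm{gph}\,G$, equipped with the norm $\|x\|+\beta_k\|y\|$ for suitable weights. We take $\varepsilon=\|y_k-\bar{y}\|$ and $\lambda=\tau_k:=\operatorname{dist}(x_k,G^{-1}(\bar{y}))/2$. This yields a point $(x_k',y_k')\in\mathrm{gph}\,G$ with the following properties:
\begin{itemize}
\item $\|x_k'-x_k\|\le\tau_k$, so $x_k'\notin G^{-1}(\bar{y})$, and therefore $y_k'\neq\bar{y}$;
\item $(x_k',y_k')$ minimizes $\varphi+(\varepsilon/\lambda)\|\cdot-(x_k',y_k')\|$.
\end{itemize}
Since $\varepsilon/\lambda\le 2/k\to0$, $x_k'$ stays in a slightly enlarged directional neighborhood and $x_k'-\bar{x}$ is comparable to $x_k-\bar{x}$. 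We then apply the fuzzy sum rule in Asplund spaces (Mordukhovich, Lemma~2.32) to the nonsmooth norm term plus the indicator. Because $y_k'\neq\bar{y}$, we can pass to a nearby point where the norm is Fréchet subdifferentiable and obtain
\[
(x_k^*,-y_k^*)\in\widehat{N}\big((x_k'',y_k'');\mathrm{gph}\,G\big),\qquad y_k^*\in\widehat\partial\|\cdot-\bar{y}\|(\tilde y_k)+o(1)\mathbb{B}^*,\qquad \|x_k^*\|\le 2/k+o(1).
\]
Here $\tilde y_k$ is close to $y_k''$, all perturbations are chosen much smaller than $\|y_k''-\bar{y}\|$, and $x_k''\notin G^{-1}(\bar{y})$ is preserved. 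Since subgradients of the norm at a nonzero point have unit norm and attain the norm, normalizing $y_k^*$ to the sphere gives $\langle y_k^*,y_k''-\bar{y}\rangle/\|y_k''-\bar{y}\|\to1$, and $x_k^*\to0$ in norm.

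Finally we set $t_k:=\|(x_k''-\bar{x},\,y_k''-\bar{y})\|$, $u_k:=(x_k''-\bar{x})/t_k$ and $v_k:=(y_k''-\bar{y})/t_k$. Then $\|(u_k,v_k)\|=1$. Since $\|y_k''-\bar{y}\|=o(\|x_k''-\bar{x}\|)$, we get $v_k\to0$, $\|u_k\|\to1$, and $\|u\|u_k\to u$ from the directional-neighborhood property. This contradicts the hypothesis.

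\textbf{Main obstacle.} The delicate step is the quantitative bookkeeping in Ekeland plus the fuzzy sum rule. Every perturbation must be $o(\|y_k''-\bar{y}\|)$ in the $y$-component and $o(\|x_k''-\bar{x}\|)$ in the $x$-component. This keeps three properties intact: the ratio $\|y''-\bar y\|/\|x''-\bar x\|\to0$, the condition $x_k''\notin G^{-1}(\bar{y})$, and the alignment limit equal to $1$. Weighting the $y$-norm in Ekeland's metric, or choosing $\lambda$ proportional to $\operatorname{dist}(x_k,G^{-1}(\bar y))$ and the fuzzy radius proportional to $\|y_k'-\bar y\|/k$, is what I would try first to achieve this.
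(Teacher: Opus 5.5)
\textbf{Gap: the Ekeland step loses the direction.} The failing step is your claim that, because $\varepsilon/\lambda\le 2/k\to0$, the Ekeland point $x_k'$ stays in a slightly enlarged directional neighbourhood. Put $d_k:=\operatorname{dist}(x_k,G^{-1}(\bar y))$. With your choice $\lambda=\tau_k=d_k/2$, Ekeland's principle only gives $\|x_k'-x_k\|\le d_k/2$. In general $d_k$ is comparable to $\|x_k-\bar x\|$; for example $d_k=\|x_k-\bar x\|$ whenever $G^{-1}(\bar y)=\{\bar x\}$. The ratio $\varepsilon/\lambda$ controls the slope of the Ekeland perturbation term, not the size of the displacement. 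Hence $x_k'-\bar x$ may deviate from $u$ by a fixed angle, and then $\|u\|u_k\to u$ fails. That is precisely the property you need to contradict the hypothesis.

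This does happen. Take $X=\mathbb R^2$, $G$ single-valued, $G^{-1}(\bar y)=\{\bar x\}$, and $\|y-\bar y\|$ equal to $\|x-\bar x\|^2$ in a sector around $u$ but much smaller along a ray at a fixed angle $\theta_0$ with $\tan\theta_0<1/2$. Start from $x_k$ on the $u$-ray. Then the minimizer of $\|y-\bar y\|+\frac{\varepsilon}{\lambda}\|(x,y)-(x_k,y_k)\|$ over $\mathrm{gph}\,G$ is an admissible Ekeland point, and it makes an angle bounded away from zero with $u$. Weighting the $y$-norm in Ekeland's metric does not help, because the $x$-displacement bound is $\lambda$ whatever the $y$-weight. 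So your own bookkeeping requirement, that $x$-perturbations be $o(\|x_k-\bar x\|)$, is violated by the Ekeland step itself, not only by the fuzzy step.

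\textbf{Repair.} Choose $\lambda_k$ strictly between the two scales, $\varepsilon_k\ll\lambda_k\ll d_k$. For instance, take $\lambda_k:=\sqrt{\varepsilon_k d_k}$ with $\varepsilon_k=\|y_k-\bar y\|<d_k/k$. Then:
\begin{itemize}
\item $\|x_k'-x_k\|\le\lambda_k<d_k/\sqrt k\le\|x_k-\bar x\|/\sqrt k$, so $x_k'\notin G^{-1}(\bar y)$ and the direction of $x_k-\bar x$ is preserved;
\item the slope $\varepsilon_k/\lambda_k=\sqrt{\varepsilon_k/d_k}<1/\sqrt k$ still tends to $0$.
\end{itemize}
Alternatively, keep $\lambda=d_k/2$ but put the weight $\sqrt k$ on the $x$-norm rather than the $y$-norm. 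The geometric-mean choice is the one the paper itself uses in the proof of Theorem~\ref{thm:directional_mscq_intersection}. With this change and a fuzzy radius $\eta_k\le\|y_k'-\bar y\|/k$, the remaining steps go through as you describe: $x_k''\notin G^{-1}(\bar y)$, $v_k\to0$, $x_k^*\to0$, and the alignment limit equal to $1$.

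\textbf{Comparison with the paper.} The paper does not reprove the lemma. It invokes Gfrerer's criterion \cite[Corollary~1, Remark~2]{gfrerer2014metric}, which rests on a variational argument of the same Ekeland-plus-fuzzy-calculus type. A corrected version of your argument is therefore a self-contained substitute for that citation.

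\textbf{Minor point.} $G(x_k)\neq\emptyset$ needs no localization argument. It is forced by the strict inequality, since $\operatorname{dist}(\bar y,\emptyset)=+\infty$.
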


\begin{proof}
Since $X$ and $Y$ are Asplund spaces, the conclusion follows directly from \cite[Corollary~1, Remark 2]{gfrerer2014metric}. Directional metric subregularity is positively homogeneous in the direction, so if $u\neq 0$ one may assume $\|u\|=1$ without loss of generality; in that case the condition $\|u\| u_k \to u$ reduces to $u_k \to u$.  
\end{proof}

Lemma~\ref{lem:sequential_ms_characterization} translates the abstract property of metric subregularity into a sequence-based contradiction argument. Still, there is a fundamental challenge in infinite-dimensional variational analysis: the difference between the weak\(^*\) topology and the norm topology, which can result in the ``vanishing'' of multipliers (i.e., $\|y_k^*\|=1$ but $y_k^*\xrightarrow{w^*} 0$). The following example illustrates that this phenomenon can occur even when $Y$ is a Hilbert space. 

\begin{example}
Consider the Hilbert space $Y=\ell^2$. Let $\{e_n\}_{n \in \mathbb{N}}$ be the sequence of standard unit vectors in $\ell^2$. Clearly, $\|e_n\|_2 = 1$ for every $n$. However, for any $x = (x_1, x_2, \dots) \in \ell^2$, we have $\langle e_n, x \rangle = x_n \to 0$ as $n \to \infty$. Thus, $e_n \xrightarrow{w} 0$, demonstrating that in infinite-dimensional settings, a sequence can be weakly convergent to zero even if its norm is a nonzero constant. 
\end{example}

To overcome this issue, the following condition serves as a recovery mechanism: for the specified normal sequences, weak\(^*\) convergence of the multipliers to zero, together with strong convergence of the \(X^*\)-components to zero, forces the multiplier norms to vanish.

\begin{definition}[Directional PSNC] \label{def:directionally_psnc}
Let $M: X \rightrightarrows Y$ be a set-valued mapping between Asplund spaces, and let $(\bar{x}, \bar{y}) \in \mathrm{gph}\, M$. Given a direction $(u, v) \in X \times Y$, the mapping $M$ is said to be \emph{partially sequentially normally compact (PSNC)} in the direction $(u, v)$ at $(\bar{x}, \bar{y})$ if, for any sequences $t_k \downarrow 0$, $(u_k, v_k) \to (u, v)$, and any dual sequences $(x_k^*, y_k^*) \in X^* \times Y^*$ satisfying 
\begin{equation*}
    (x_k^*, -y_k^*) \in \widehat{N} \big( (\bar{x} + t_k u_k, \bar{y} + t_k v_k); \mathrm{gph}\, M \big),
\end{equation*}
the conditions $x_k^* \to 0$ and $y_k^* \xrightarrow{w^*} 0$ imply that $\lim_{k \to \infty} \|y_k^*\| = 0$. 
\end{definition}

It is well known that the Lipschitz-like property implies PSNC \cite[Theorem~1.44]{Mordukhovich2006variational}. We now present the Asplund space version of the results in \cite[Theorem~3.1]{bai2019directional}. 

\begin{definition}[Weak Sufficient Condition for Metric Subregularity] \label{defn:wscms}
    Let $P(\bar{x}) \in \Lambda$ and let $u \in X$ with $\|u\|=1$. Assume that $P$ is Hadamard directionally differentiable at $\bar{x}$ in the direction $u$ with $v := P'_H(\bar{x}; u)$. The \emph{weak sufficient condition for metric subregularity} (WSCMS) is said to hold at $\bar{x}$ in the direction $u$ if there exists no multiplier $y^* \in Y^* \setminus \{0\}$, no sequences $t_k \downarrow 0$, and $u_k \to u$, $w_k \to 0$ ($w_k \neq 0$), $y_k^* \xrightarrow{w^*} y^*$ satisfying the following conditions simultaneously:
    \begin{enumerate}
        \item[\rm(1)] $0 \in D_N^* P\big(\bar{x}; (u,v)\big)(y^*)$ and $y^* \in N(P(\bar{x}); \Lambda; v)$;
        \item[\rm(2)] $y_k^* \in \widehat{N}(s_k; \Lambda)$, where $s_k = P(\bar{x} + t_k u_k) - t_k w_k$ and $P(\bar{x} + t_k u_k) \notin \Lambda$;
        \item[\rm(3)] $\lim_{k \to \infty} \left\langle y_k^*, \frac{w_k}{\|w_k\|} \right\rangle = 1$.
    \end{enumerate}
\end{definition}

\begin{theorem} \label{thm:sufficient_condition_for_mscq}
    Let $P(\bar{x})\in\Lambda$, and let $u \in X$ with $\|u\| = 1$ be a given direction. Assume that $P$ is Hadamard directionally differentiable at $\bar{x}$ in the direction $u$ with $v := P'_H(\bar{x}; u)$, and that $P$ is directionally PSNC at $(\bar{x}, P(\bar{x}))$ in the direction $(u, v)$. If WSCMS holds at $\bar{x}$ in the direction $u$, then $G(x) := P(x) - \Lambda$ is metrically subregular at $(\bar{x}, 0)$ in the direction $u$.
\end{theorem}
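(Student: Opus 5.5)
We argue by contradiction via Lemma~\ref{lem:sequential_ms_characterization}. Suppose $G$ is not metrically subregular at $(\bar{x},0)$ in the direction $u$. Then there are $t_k\downarrow 0$, $(u_k,v_k)$ with $\|(u_k,v_k)\|=1$, $\|u_k\|\to 1$, $u_k\to u$ (using $\|u\|=1$), $v_k\to 0$, unit $y_k^*$, and $x_k^*\to 0$ such that, with $x_k':=\bar{x}+t_ku_k\notin P^{-1}(\Lambda)$ and $y_k':=t_kv_k\neq 0$,
\[
(x_k^*,-y_k^*)\in\widehat{N}\big((x_k',y_k');\mathrm{gph}\,G\big),\qquad \langle y_k^*,v_k\rangle/\|v_k\|\to 1 .
\]
Set $w_k:=v_k$, so $w_k\to 0$ and $w_k\neq 0$. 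Since $y_k'\in G(x_k')$, there is $s_k\in\Lambda$ with $s_k=P(x_k')-t_kw_k$. Lemma~\ref{lem:normal_cone_G_representation} then gives
\[
x_k^*\in\widehat{D}_N^*P(x_k')(y_k^*)\qquad\text{and}\qquad y_k^*\in\widehat{N}(s_k;\Lambda).
\]
Also $P(x_k')\notin\Lambda$ because $x_k'\notin P^{-1}(\Lambda)$. Hence conditions (2) and (3) of WSCMS hold along these sequences.

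Next we extract the multiplier. Since $Y$ is Asplund, bounded sequences in $Y^*$ have weak$^*$ convergent subsequences. Passing to a subsequence, $y_k^*\xrightarrow{w^*}y^*$. We now apply the directional PSNC of $P$ at $(\bar{x},P(\bar{x}))$ in the direction $(u,v)$. Write $P(x_k')=P(\bar{x})+t_k\tilde v_k$ with $\tilde v_k:=(P(x_k')-P(\bar{x}))/t_k$; Hadamard differentiability gives $\tilde v_k\to v$. We have $(x_k^*,-y_k^*)\in\widehat{N}\big((\bar{x}+t_ku_k,P(\bar{x})+t_k\tilde v_k);\mathrm{gph}\,P\big)$ and $x_k^*\to 0$. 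If $y^*=0$, PSNC would force $\|y_k^*\|\to 0$, contradicting $\|y_k^*\|=1$. So $y^*\neq 0$. This nonvanishing step is the crux; it is exactly where PSNC replaces the compactness used in finite dimensions.

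Condition (1) follows by passing to the limit in the two Fréchet inclusions. From $(x_k^*,-y_k^*)\in\widehat{N}$ on $\mathrm{gph}\,P$ at points $(\bar{x},P(\bar{x}))+t_k(u_k,\tilde v_k)$ with $(u_k,\tilde v_k)\to(u,v)$, the limits $x_k^*\to 0$ in norm and $y_k^*\xrightarrow{w^*}y^*$ give $(0,-y^*)\in N\big((\bar{x},P(\bar{x}));\mathrm{gph}\,P;(u,v)\big)$, i.e. $0\in D_N^*P(\bar{x};(u,v))(y^*)$. This uses the Asplund representation, in which $\varepsilon$ can be omitted since $\mathrm{gph}\,P$ is closed ($P$ being continuous). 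Likewise,
\[
\frac{s_k-P(\bar{x})}{t_k}=\tilde v_k-w_k\to v,
\]
so $s_k=P(\bar{x})+t_k(\tilde v_k-w_k)$ and $y_k^*\in\widehat{N}(s_k;\Lambda)$ yield $y^*\in N(P(\bar{x});\Lambda;v)$. Thus $y^*\neq 0$, the sequences $t_k,u_k,w_k,y_k^*$ satisfy (1)--(3) of WSCMS, and this contradicts the assumption. Therefore $G$ is metrically subregular at $(\bar{x},0)$ in the direction $u$.

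The main obstacle is the bookkeeping rather than any deep estimate. Three points need care: matching the normalization $\|(u_k,v_k)\|=1$ with $u_k\to u$; noting that the $\Lambda$-directional limit uses the shifted points $s_k$, whose difference quotient still tends to $v$ because $w_k\to 0$; and ensuring the weak$^*$ sequential compactness of the dual ball, which holds in Asplund spaces.
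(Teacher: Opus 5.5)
Your proof is correct and follows essentially the same route as the paper: contradiction via Lemma~\ref{lem:sequential_ms_characterization}, decoupling via Lemma~\ref{lem:normal_cone_G_representation} with $s_k=P(\bar{x}+t_ku_k)-t_kw_k$, weak$^*$ extraction of $y^*$, directional PSNC to exclude $y^*=0$, and passage to the limit to obtain condition (1) and contradict WSCMS. One minor point: passing the Fr\'echet normals to the directional limiting cones needs no Asplund argument about omitting $\varepsilon$, because $\widehat{N}\subset\widehat{N}_{\varepsilon_k}$ holds trivially.
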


\begin{proof}
    Assume to the contrary that $G(x) = P(x) - \Lambda$ is not metrically subregular at $(\bar{x}, 0)$ in the direction $u$. By the sequential characterization of directional metric subregularity in Lemma~\ref{lem:sequential_ms_characterization}, together with the decoupling in Lemma~\ref{lem:normal_cone_G_representation}, there exist sequences $t_k \downarrow 0$, $w_k \to 0$ (serving as the residual direction), normalized multiplier sequences $\|y_k^*\| = 1$, and directions $u_k \to u$, $x_k^* \to 0$ such that:
    \begin{equation} \label{eq:proof_sequences}
        (x_k^*, -y_k^*) \in \widehat{N}\bigl((\bar{x} + t_k u_k, P(\bar{x} + t_k u_k)); \mathrm{gph}\, P\bigr), \quad y_k^* \in \widehat{N}\bigl(s_k; \Lambda\bigr),
    \end{equation}
    where $s_k := P(\bar{x} + t_k u_k) - t_k w_k \in \Lambda$, and $\lim_{k \to \infty} \langle y_k^*, \frac{w_k}{\|w_k\|} \rangle = 1$. 

    Since $Y$ is Asplund, after passing to a subsequence, we may suppose that $y_k^* \xrightarrow{w^*} y^*$ in $Y^*$ with $\|y^*\| \leq 1$.
    
    Next, we explicitly show that $v \in T(P(\bar{x}); \Lambda)$. By the definition of the normal cone in \eqref{eq:proof_sequences}, we must have $s_k \in \Lambda$ for all $k$. We can rewrite $s_k$ as $s_k = P(\bar{x}) + t_k \tilde{w}_k - t_k w_k$, where $\tilde{w}_k := \frac{P(\bar{x} + t_k u_k) - P(\bar{x})}{t_k}$. By the Hadamard directional differentiability of $P$ at $\bar{x}$ in direction $u$, and since $u_k \to u$, we have $\tilde{w}_k \to v$. Because $w_k \to 0$, it follows that $\frac{s_k - P(\bar{x})}{t_k} = \tilde{w}_k - w_k \to v - 0 = v$.
    
    Since $s_k \in \Lambda$ and $P(\bar{x}) \in \Lambda$, the sequential limit definition of the tangent cone immediately yields $v \in T(P(\bar{x}); \Lambda)$. This ensures $v \in DP(\bar{x})(u) \cap T(P(\bar{x}); \Lambda)$. 

    Returning to \eqref{eq:proof_sequences}, taking the sequential limit as $k \to \infty$, the definition of the directional limiting normal cone combined with $(u_k, \tilde{w}_k) \to (u, v)$ yields:
    \begin{equation*}
        (0, -y^*) \in N\bigl((\bar{x}, P(\bar{x})); \mathrm{gph}\, P; (u, v)\bigr) \quad \text{and} \quad y^* \in N(P(\bar{x}); \Lambda; v).
    \end{equation*}
    This establishes condition (1).

    If $y^* = 0$, the directional PSNC property of $P$ at $(\bar{x}, P(\bar{x}))$ in the direction $(u,v)$ asserts that the combination of $y_k^* \xrightarrow{w^*} 0$, $x_k^* \to 0$, and $(u_k, \tilde{w}_k) \to (u, v)$ mandates the strong convergence $\lim_{k \to \infty} \|y_k^*\| = 0$. This contradicts the normalization condition $\|y_k^*\| = 1$. Therefore, it must hold that $y^* \neq 0$.

    We have constructed a non-zero multiplier $y^*$ and sequences satisfying conditions (1), (2), and (3) of the WSCMS simultaneously. This contradicts our initial premise and completes the proof.  
\end{proof}

Theorem~\ref{thm:sufficient_condition_for_mscq} establishes a sequential verification criterion for directional metric subregularity in Asplund spaces by leveraging the directional PSNC property. By ruling out the existence of pathological directions and multipliers that satisfy the specified directional condition $\lim_{k \to \infty} \langle y_k^*, w_k/\|w_k\| \rangle = 1$, this result guarantees a sharp local error bound for the mapping $G(x) = P(x) - \Lambda$. 

Moreover, \cite[Corollary~1]{gfrerer2014metric} established a related sequential criterion by employing a sequence of $\varepsilon$-regular normal cones $\widehat{N}_{\varepsilon_k}$ evaluated at perturbed points $(\bar{x} + t_k u_k, \bar{y} + t_k v_k)$ with $u_k \to u$ and $v_k \to 0$. In contrast, Theorem~\ref{thm:sufficient_condition_for_mscq} successfully generalizes directional metric subregularity to composite structures of the form $P(x) - \Lambda$ by leveraging Hadamard differentiability and PSNC. 

To further simplify the validation of the WSCMS condition in Definition~\ref{defn:wscms}, we establish a technical tool linking the alignment of dual multipliers with the functional variation of the mapping $P$. The following corollary demonstrates that the alignment condition in WSCMS naturally triggers the strict inequality required in the definition of directional pseudo-normality, thereby generalizing \cite[Corollary~4.1]{bai2019directional} to Asplund spaces. 

\begin{corollary} \label{cor:dpn_to_mscq}
    Suppose that $P$ is Hadamard directionally differentiable at $\bar{x}$ in a given direction $u \in X$ with $v:=P'_H(\bar{x};u)$, and that $P$ is directionally PSNC at $(\bar{x},P(\bar{x}))$ in the direction $(u,v)$. If directional pseudo-normality holds at $\bar{x}$ in the direction $u$, then $G(x) := P(x) - \Lambda$ is metrically subregular at $(\bar{x}, 0)$ in the direction $u$. 
\end{corollary}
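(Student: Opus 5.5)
The plan is to reduce the claim to Theorem~\ref{thm:sufficient_condition_for_mscq}: we show that directional pseudo-normality in the direction $u$ implies WSCMS in the direction $u$. First dispose of trivial cases. If $u=0$ the statement is ordinary and handled identically. Otherwise, by positive homogeneity of directional metric subregularity, directional normal cones and Hadamard derivatives in the direction, we may rescale so that $\|u\|=1$. The PSNC assumption is unaffected by this rescaling, since it is stated for sequences $(u_k,v_k)\to(u,v)$ and can be reparametrized by $t_k$.

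Suppose WSCMS fails. Then there are $y^*\neq 0$, $t_k\downarrow 0$, $u_k\to u$, $w_k\to 0$ with $w_k\neq 0$, and $y_k^*\xrightarrow{w^*}y^*$ satisfying conditions (1)--(3) of Definition~\ref{defn:wscms}. We verify that these data violate directional pseudo-normality with $v=P'_H(\bar x;u)$, in three steps.
\begin{enumerate}
\item[(a)] We check $v\in DP(\bar x)(u)\cap T(P(\bar x);\Lambda)$. The first membership follows from Hadamard differentiability. For the second, (1) gives $y^*\in N(P(\bar x);\Lambda;v)\neq\emptyset$, so $v\in T(P(\bar x);\Lambda)$. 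Alternatively, as in the proof of Theorem~\ref{thm:sufficient_condition_for_mscq}, $(s_k-P(\bar x))/t_k\to v$.
\item[(b)] Condition (i) of Definition~\ref{defn:directional_pseudo_normality} is exactly WSCMS (1).
\item[(c)] For condition (ii), take the same $t_k,u_k,y_k^*$ and $s_k=P(\bar x+t_ku_k)-t_kw_k$. Then $s_k\to P(\bar x)$ by continuity of $P$ (a consequence of Hadamard differentiability along $u_k\to u$) together with $t_kw_k\to 0$. Moreover $y_k^*\in\widehat N(s_k;\Lambda)$ by WSCMS (2).
\end{enumerate}

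The key step is (iii). We have $P(\bar x+t_ku_k)-s_k=t_kw_k$, so
\[
\langle y_k^*,P(\bar x+t_ku_k)-s_k\rangle=t_k\|w_k\|\Bigl\langle y_k^*,\tfrac{w_k}{\|w_k\|}\Bigr\rangle .
\]
By WSCMS (3) the bracket tends to $1$, hence it is positive for large $k$. Since $t_k>0$ and $w_k\neq 0$, the whole expression is strictly positive for all large $k$, which gives \eqref{eq:directional_pseudo_normality}. Thus a nonzero $y^*$ satisfies (i)--(iii) simultaneously, contradicting directional pseudo-normality. Hence WSCMS holds, and Theorem~\ref{thm:sufficient_condition_for_mscq} yields the metric subregularity of $G$ at $(\bar x,0)$ in the direction $u$.

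The main obstacle is bookkeeping rather than analysis. One must ensure the normalization $\|u\|=1$ required in Definition~\ref{defn:wscms} and Theorem~\ref{thm:sufficient_condition_for_mscq} is compatible with the unnormalized $u$ in the corollary. One must also confirm that the sign condition in Definition~\ref{defn:directional_pseudo_normality} is imposed with the sequence $y_k^*$ and not with the limit $y^*$. This is exactly what makes (iii) follow directly from (3) without any norm-convergence of $y_k^*$.
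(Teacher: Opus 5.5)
Your proof is correct, but it is organized differently from the paper's. The paper never calls on Theorem~\ref{thm:sufficient_condition_for_mscq}. Instead it reruns the whole contradiction argument: failure of subregularity together with Lemma~\ref{lem:sequential_ms_characterization} produces the sequences, Lemma~\ref{lem:normal_cone_G_representation} decouples the Fr\'echet normals, weak$^*$ compactness gives $y^*$, directional PSNC excludes $y^*=0$, and a limit passage yields condition (i). Only at the end does it convert the alignment $\langle y_k^*,w_k/\|w_k\|\rangle\to 1$ into the sign condition (iii).

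You instead isolate the purely logical implication ``directional pseudo-normality $\Rightarrow$ WSCMS'' and then use Theorem~\ref{thm:sufficient_condition_for_mscq} as a black box. The decisive identity $\langle y_k^*,P(\bar x+t_ku_k)-s_k\rangle=t_k\|w_k\|\,\langle y_k^*,w_k/\|w_k\|\rangle$ is common to both proofs. Your route avoids duplicating the limiting argument. It also makes explicit that neither PSNC nor any compactness is needed to compare the two qualification conditions; they enter only inside Theorem~\ref{thm:sufficient_condition_for_mscq}. That is exactly what the sentence preceding the corollary announces.

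The price is that you inherit the normalization $\|u\|=1$ built into Definition~\ref{defn:wscms}. Your rescaling for $u\neq 0$ is fine, since every object involved is invariant under a positive rescaling of $(u,v)$ with $t_k$ rescaled inversely. For $u=0$, however, WSCMS is not defined, so ``handled identically'' really means redoing the direct argument. There Lemma~\ref{lem:sequential_ms_characterization} only gives $\|u_k\|\to 1$, not $u_k\to 0$. You would need to reparametrize, e.g.\ with $\sqrt{t_k}$ in place of $t_k$, using the calmness of $P$ at $\bar x$ that Hadamard differentiability in the direction $0$ entails, before PSNC and pseudo-normality apply. The paper's direct proof passes over the same point silently, so this is a shared bookkeeping issue rather than a flaw peculiar to your approach.
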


\begin{proof}
    Assume to the contrary that $G$ is not metrically subregular at $(\bar{x}, 0)$ in the direction $u$. By the sequential characterization (Lemma~\ref{lem:sequential_ms_characterization}), there exist sequences $t_k \downarrow 0$, $w_k \to 0$ ($w_k \neq 0$), and normalized multipliers $\|y_k^*\| = 1$ with $u_k \to u$ and $x_k^* \to 0$ such that
    \begin{equation} \label{eq:cor_proof_inclusion}
        (x_k^*, -y_k^*) \in \widehat{N}\big((x_k', y_k'); \mathrm{gph}\, G\big), \quad \lim_{k\to \infty} \left\langle y_k^*, \frac{y_k'}{\|y_k'\|} \right\rangle = 1,
    \end{equation}
    where $x_k' := \bar{x} + t_k u_k$ and $y_k' := t_k w_k \neq 0$. By the definition of $G(x) = P(x) - \Lambda$, $y_k' \in G(x_k')$ implies that there exists $s_k \in \Lambda$ such that $y_k' = P(x_k') - s_k$.

    Applying the decoupling lemma (Lemma~\ref{lem:normal_cone_G_representation}) to the Fr\'echet normal cone inclusion in \eqref{eq:cor_proof_inclusion}, we obtain:
    \begin{equation} \label{eq:cor_decoupled}
        x_k^* \in \widehat{D}_N^* P(x_k')(y_k^*) \quad \text{and} \quad y_k^* \in \widehat{N}(s_k; \Lambda).
    \end{equation}
    Since $Y$ is Asplund, after passing to a subsequence, we may suppose that $y_k^* \xrightarrow{w^*} y^*$ in the closed unit ball of $Y^*$. Because $x_k^* \to 0$, if $y^* = 0$, the directional PSNC property of $P$ would force $\lim_{k \to \infty} \|y_k^*\| = 0$, which contradicts $\|y_k^*\| = 1$. Thus, we must have $y^* \neq 0$. 

    Furthermore, since $P$ is Hadamard directionally differentiable at $\bar{x}$ in the direction $u$, we define $v := P'_H(\bar{x}; u)$. It follows that $\frac{P(\bar{x} + t_k u_k) - P(\bar{x})}{t_k} \to v$. Consequently, taking the sequential limit as $k \to \infty$ in \eqref{eq:cor_decoupled} gives:
    \begin{equation*}
        0 \in D_N^* P\big(\bar{x}; (u, v)\big)(y^*) \quad \text{and} \quad y^* \in N(P(\bar{x}); \Lambda; v),
    \end{equation*}
    which also implies $v \in DP(\bar{x})(u) \cap T(P(\bar{x}); \Lambda)$ by definition.

    Finally, we translate the alignment condition from \eqref{eq:cor_proof_inclusion}. Since $y_k' = t_k w_k$, the condition $\lim_{k \to \infty} \langle y_k^*, w_k/\|w_k\| \rangle = 1$ ensures that for all sufficiently large $k$, we directly have $\langle y_k^*, w_k/\|w_k\| \rangle > 0$. Multiplying this inequality by the positive scalar $t_k \|w_k\|$, we obtain:
    \begin{equation*}
        \langle y_k^*, t_k w_k \rangle = \langle y_k^*, P(\bar{x} + t_k u_k) - s_k \rangle > 0.
    \end{equation*}

    We have therefore found a multiplier $y^* \neq 0$, along with corresponding sequences, that simultaneously satisfy all the conditions of Definition~\ref{defn:directional_pseudo_normality}. This contradicts the assumption that directional pseudo-normality holds at $\bar{x}$. Thus, $G$ must be metrically subregular at $(\bar{x}, 0)$ in the direction $u$.  
\end{proof}

When the image space $Y = \mathbb{R}^d$ is finite-dimensional, the bounded sequence of multipliers $\{y_k^*\}$ on the unit sphere admits a strongly convergent subsequence. Consequently, the limit multiplier inherently satisfies $\|y^*\| = 1 \neq 0$, rendering the PSNC assumption automatically fulfilled. However, the Hadamard directional differentiability of $P$ remains a crucial structural requirement. Under this setting, the metric subregularity can be secured via the directional quasi-normality of Definition~\ref{defn:directional_quasi_normality}.

\begin{corollary} \label{cor:dqn_to_mscq}
    Let $Y = \mathbb{R}^d$. Suppose that $P$ is Hadamard directionally differentiable at $\bar{x}$ in a given direction $u \in X$. If the directional quasi-normality of Definition~\ref{defn:directional_quasi_normality} holds at $\bar{x}$ in the direction $u$, then $G(x) := P(x) - \Lambda$ is metrically subregular at $(\bar{x}, 0)$ in the direction $u$. 
\end{corollary}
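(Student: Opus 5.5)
The plan is to argue by contradiction, following the proof of Corollary~\ref{cor:dpn_to_mscq}. The finite dimensionality of $Y=\mathbb{R}^d$ will replace the PSNC assumption, and a componentwise sign argument will replace the sequential pseudo-normality inequality.

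\emph{Extracting sequences.} Suppose $G$ is not metrically subregular at $(\bar{x},0)$ in the direction $u$. Since $X$ is Asplund and $\mathbb{R}^d$ is Asplund, Lemma~\ref{lem:sequential_ms_characterization} applies. It gives $t_k\downarrow 0$, $u_k\to u$ (after normalizing $u$ if $u\neq0$), $w_k\to 0$ with $w_k\neq 0$, $\|y_k^*\|=1$ and $x_k^*\to 0$ such that
\[
(x_k^*,-y_k^*)\in\widehat{N}\bigl((\bar{x}+t_ku_k,t_kw_k);\mathrm{gph}\,G\bigr),\qquad \Bigl\langle y_k^*,\tfrac{w_k}{\|w_k\|}\Bigr\rangle\to 1 .
\]
Write $t_kw_k=P(\bar{x}+t_ku_k)-s_k$ with $s_k\in\Lambda$. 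Lemma~\ref{lem:normal_cone_G_representation} then yields $x_k^*\in\widehat{D}_N^*P(\bar{x}+t_ku_k)(y_k^*)$ and $y_k^*\in\widehat{N}(s_k;\Lambda)$.

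\emph{Passing to the limit.} The unit sphere of $\mathbb{R}^d$ is compact, so along a subsequence $y_k^*\to y^*$ in norm with $\|y^*\|=1$; no PSNC assumption is needed. By Hadamard differentiability, $(P(\bar{x}+t_ku_k)-P(\bar{x}))/t_k\to v:=P'_H(\bar{x};u)$. It follows that $(s_k-P(\bar{x}))/t_k\to v$ and $s_k\to P(\bar{x})$. As in Remark~\ref{rem:optimality_restated}, we obtain $v\in DP(\bar{x})(u)\cap T(P(\bar{x});\Lambda)$, $0\in D_N^*P(\bar{x};(u,v))(y^*)$ and $y^*\in N(P(\bar{x});\Lambda;v)$. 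This verifies items (i) and (ii) of Definition~\ref{defn:directional_quasi_normality}.

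\emph{The sign condition (iii).} This is the main obstacle. Alignment gives $\langle y_k^*,w_k/\|w_k\|\rangle\to1$ with $\|y_k^*\|=1$. I will take the norm on $\mathbb{R}^d$ to be Euclidean, which only changes the modulus of subregularity. With this norm the normalized vectors $e_k:=w_k/\|w_k\|$ satisfy $\|e_k-y_k^*\|^2=2-2\langle y_k^*,e_k\rangle\to0$, so $e_k\to y^*$. Hence for each index $i$ with $[y^*]_i\neq0$, the component $[e_k]_i$ eventually has the sign of $[y^*]_i$. Multiplying by $t_k\|w_k\|>0$ gives
\[
[y^*]_i\,[P(\bar{x}+t_ku_k)-s_k]_i>0
\]
for all large $k$ and all such $i$. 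This is (iii), so $y^*\neq0$ violates directional quasi-normality, which completes the contradiction.

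If a general norm must be kept, I would instead note that Lemma~\ref{lem:sequential_ms_characterization} can be invoked in the Euclidean norm, since metric subregularity is invariant under equivalent norms. The case $u=0$ reduces to the ordinary (nondirectional) setting and is handled identically.
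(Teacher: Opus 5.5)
Your proposal is correct and follows the paper's route: the contradiction argument of Corollary~\ref{cor:dpn_to_mscq}, with compactness of the unit sphere of $\mathbb{R}^d$ replacing directional PSNC and yielding $\|y^*\|=1$. Your Euclidean-norm computation $\|e_k-y_k^*\|^2=2-2\langle y_k^*,e_k\rangle\to0$ spells out the componentwise sign condition (iii), which the paper's proof of this corollary only asserts, but which it carries out in exactly this way in Corollary~\ref{coro:directional_mscq_intersection_finite}.
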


\begin{proof}
The argument is the same as that of Corollary~\ref{cor:dpn_to_mscq}, except that compactness of the unit sphere in $\mathbb{R}^d$ yields a strongly convergent subsequence of $\{y_k^*\}$ with $\|y^*\|=1$. Directional PSNC is therefore automatic. In finite dimensions the pairing condition of directional quasi-normality is imposed on the limit multiplier $y^*$, which is legitimate because the topologies of $Y^*$ coincide.  
\end{proof}

Consider an optimization problem in Asplund spaces:
\begin{equation} \label{eq:ocpec}
    \min_{x \in X} \quad J(x) \quad \text{s.t. } P_1(x) \in \Lambda_1, \quad P_2(x) \in \Lambda_2,
\end{equation}
where $J: X \to \mathbb{R}$ is a locally Lipschitz function, $P_1: X \to Y_1$ and $P_2: X \to Y_2$ are continuous mappings between Asplund spaces, and $\Lambda_1 \subseteq Y_1, \Lambda_2 \subseteq Y_2$ are closed sets. 
In practice, $\mathcal{F}$ frequently involves coupled mappings, such as the value function reformulations of bilevel problems~\cite{ye1995necessary,Ye2010}.  

Next, consider two set-valued mappings $F_1:=P_1-\Lambda_1: X \rightrightarrows Y_1$ and $F_2:=P_2-\Lambda_2: X \rightrightarrows Y_2$. Define the set-valued mapping $F: X \rightrightarrows Y_1 \times Y_2$ by $F(x) := \big( F_1(x), F_2(x) \big)$. 
The following definition records the sequential normal configuration used for the joint system. The first constraint is the part whose directional metric subregularity is known in advance, whereas the residual of the second constraint is used in the variational argument. 

\begin{definition}[Joint directional pseudo/quasi-normality]\label{defn:joint_directional_pseudo_normality}
Let $F_i:X\rightrightarrows Y_i$ have closed graphs, let $\bar{y}_i\in F_i(\bar{x})$, $i=1,2$, and let $u\in X$ be given.

\begin{enumerate}
\item[(i)] The \emph{joint directional pseudo-normality} condition for $F_1$ and $F_2$ holds at $(\bar{x},\bar{y}_1,\bar{y}_2)$ in the direction $u$ if there do not exist sequences $t_k\downarrow0$, $(x_{i,k},y_{i,k})\in\operatorname{gph}F_i$, and dual elements $(x_{i,k}^*,y_{i,k}^*)\in X^*\times Y_i^*$, $i=1,2$, such that
\begin{equation}\label{eq:sequential_qualification_primal}
    \frac{x_{i,k}-\bar{x}}{t_k}\to u,
    \qquad
    \frac{y_{i,k}-\bar{y}_i}{t_k}\to0,
    \qquad i=1,2,
\end{equation}
\begin{equation}\label{eq:sequential_decoupled_normals}
\begin{aligned}
    &x_{i,k}^*\in \widehat D_N^*P_i(x_{i,k})(y_{i,k}^*),\;\; 
    y_{i,k}^*\in \widehat N(s_{i,k};\Lambda_i),\\
    &s_{i,k}:=P_i(x_{i,k})-y_{i,k}\in\Lambda_i,
    \qquad i=1,2.
\end{aligned}
\end{equation}
and
\begin{equation}\label{eq:sequential_qualification_dual}
    \|x_{1,k}^*+x_{2,k}^*\|\to0,
    \qquad
    \sup_k\|y_{1,k}^*\|<\infty,
    \qquad
    \|y_{2,k}^*\|=1,
\end{equation}
and
\begin{equation}\label{eq:joint_directional_pseudo_normality}
    \langle y_{2,k}^*,y_{2,k}-\bar{y}_2\rangle>0
    \qquad\text{for all sufficiently large }k.
\end{equation}

\item[(ii)] If $Y_2=\mathbb R^d$, the \emph{joint directional quasi-normality} condition for $F_1$ and $F_2$ holds at $(\bar{x},\bar{y}_1,\bar{y}_2)$ in the direction $u$ if there do not exist sequences satisfying \eqref{eq:sequential_qualification_primal}--\eqref{eq:sequential_qualification_dual} and a $y_2^*$ such that $y_{2,k}^*\to y_2^*$, with \eqref{eq:joint_directional_pseudo_normality} replaced by
\begin{equation*}
    [y_2^*]_j[y_{2,k}-\bar{y}_2]_j>0
    \quad\text{for every }j\text{ with }[y_2^*]_j\ne0
    \quad\text{and all sufficiently large }k.
\end{equation*}
\end{enumerate}
\end{definition}

The following theorem addresses directional metric subregularity for the intersection of two closed constraints. 

\begin{theorem}\label{thm:directional_mscq_intersection}
Let $X$, $Y_1$, and $Y_2$ be Asplund spaces, and endow $Y_1\times Y_2$ with the sum norm $\|(y_1,y_2)\|=\|y_1\|+\|y_2\|$. 
Let $(\bar{x}, \bar{y}_1, \bar{y}_2)$ be a reference point satisfying $\bar{y}_1 \in F_1(\bar{x})$ and $\bar{y}_2 \in F_2(\bar{x})$. Suppose that $u \in X$ is a given direction such that \(0\in DF_i(\bar{x}\mid \bar{y}_i)(u)\) for $i=1,2$. Assume that the following conditions hold:

\begin{enumerate}
\item[(i)] $F_1$ is metrically subregular at $(\bar{x},\bar{y}_1)$ in the direction $u$, and $F_2$ is directionally Lipschitz-like at $(\bar{x}, \bar{y}_2)$ in the direction $u$. In addition, $\operatorname{dist}(\bar{y}_2,F_2(x))$ is attained whenever $x$ belongs to a sufficiently small directional neighborhood of $\bar{x}$ along $u$.

\item[(ii)] Joint directional pseudo-normality of $F_1$ and $F_2$ holds at $(\bar{x},\bar{y}_1,\bar{y}_2)$ in the direction $u$.
\end{enumerate}

Then, $F$ is metrically subregular at $\big(\bar{x}, (\bar{y}_1, \bar{y}_2)\big)$ in the direction $u$. 
\end{theorem}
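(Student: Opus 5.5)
We argue by contradiction, reducing the joint subregularity problem to a variational (Ekeland-type) argument on the second constraint, with $F_1$ handled by its known directional subregularity. Suppose $F$ is not metrically subregular at $(\bar x,(\bar y_1,\bar y_2))$ in direction $u$. Then there are $x_k\in\mathcal V_{\varepsilon_k,\delta_k}(\bar x;u)$ with $\varepsilon_k,\delta_k\downarrow0$ such that
\[
\operatorname{dist}(x_k,F^{-1}(\bar y_1,\bar y_2))>k\bigl(\operatorname{dist}(\bar y_1,F_1(x_k))+\operatorname{dist}(\bar y_2,F_2(x_k))\bigr).
\]
Using (i), directional subregularity of $F_1$ lets us replace $x_k$ by a nearby point of $F_1^{-1}(\bar y_1)$ at distance $O(\operatorname{dist}(\bar y_1,F_1(x_k)))$. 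The directional Lipschitz-like property of $F_2$ shows that the $F_2$-residual changes only by a comparable amount. We may thus assume $\bar y_1\in F_1(x_k)$, $\operatorname{dist}(\bar y_2,F_2(x_k))>0$, and the ratio still blows up. With $t_k:=\|x_k-\bar x\|$, the directional neighborhood gives $(x_k-\bar x)/t_k\to u/\|u\|$; after rescaling $u$ we take $\|u\|=1$. The case $u=0$ is handled similarly with $t_k$ the same norm.

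Next we apply Ekeland's variational principle to $\varphi(x):=\operatorname{dist}(\bar y_2,F_2(x))$ restricted to the closed set $F_1^{-1}(\bar y_1)$. This is equivalently the function $(x,y_2)\mapsto\|y_2-\bar y_2\|+\delta_{\operatorname{gph}F_2}(x,y_2)$ minimized over $x\in F_1^{-1}(\bar y_1)$; the attainment hypothesis in (i) lets us pick $y_{2}\in F_2(x)$ realizing the distance. We take Ekeland parameter $\varphi(x_k)$ and radius $\rho_k:=\tfrac12\operatorname{dist}(x_k,F^{-1}(\bar y_1,\bar y_2))$, so the Ekeland slope is $\varphi(x_k)/\rho_k=o(1)$. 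The resulting point $\tilde x_k$ stays within $\rho_k=o(t_k)$-order control relative to $t_k$, is not in $F_2^{-1}(\bar y_2)$, and minimizes
\[
\|y_2-\bar y_2\|+\delta_{\operatorname{gph}F_2}(x,y_2)+\delta_{\operatorname{gph}F_1}(x',y_1)+\delta_{\{x=x'\}}+\delta_{\{y_1=\bar y_1\}}+\tfrac{\varphi(x_k)}{\rho_k}\|x-\tilde x_k\|
\]
locally. We then apply the fuzzy sum rule in Asplund spaces (Mordukhovich's Theorem~2.33 type). This produces points $(x_{i,k},y_{i,k})\in\operatorname{gph}F_i$ near $(\tilde x_k,\cdot)$ within $o(t_k)$, Fréchet normals $(x_{i,k}^*,-y_{i,k}^*)$, and $y_{2,k}^*$ close to a subgradient of the norm at $y_{2,k}-\bar y_2\neq0$. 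Hence $\|y_{2,k}^*\|\approx1$ and $\langle y_{2,k}^*,y_{2,k}-\bar y_2\rangle\approx\|y_{2,k}-\bar y_2\|>0$, with $\|x_{1,k}^*+x_{2,k}^*\|\to0$. Normalizing $y_{2,k}^*$ to unit norm, the decoupling Lemma~\ref{lem:normal_cone_G_representation} converts these normals into \eqref{eq:sequential_decoupled_normals}. The primal convergences \eqref{eq:sequential_qualification_primal} follow since $y_{1,k}$ is $o(t_k)$-close to $\bar y_1$ and $\|y_{2,k}-\bar y_2\|\le\varphi(x_k)+o(t_k)=o(t_k)$, the latter because $0\in DF_2(\bar x\mid\bar y_2)(u)$ together with the Lipschitz-like property.

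The main obstacle is the bound $\sup_k\|y_{1,k}^*\|<\infty$. The multiplier of the $F_1$-constraint comes from the term $\delta_{\{y_1=\bar y_1\}}$ and is a priori uncontrolled. To avoid this, we plan to replace the hard constraint $x\in F_1^{-1}(\bar y_1)$ by the exact penalty $\kappa_1\operatorname{dist}(\bar y_1,F_1(x))$, where $\kappa_1$ is the directional subregularity modulus of $F_1$. This is legitimate by a Clarke-type exact-penalization argument valid within the directional neighborhood, and it bounds $\|y_{1,k}^*\|\le\kappa_1+1$ via subgradients of $\kappa_1\|y_1-\bar y_1\|$. With this, all requirements of Definition~\ref{defn:joint_directional_pseudo_normality}(i) hold, contradicting (ii).
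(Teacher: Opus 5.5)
Your overall architecture matches the paper's: pull back into $S_1:=F_1^{-1}(\bar y_1)$, apply Ekeland to $\varphi=\operatorname{dist}(\bar y_2,F_2(\cdot))$, then use the fuzzy sum rule with $\varphi$ lifted through $\operatorname{gph}F_2$, normalization and decoupling. However, your choice of Ekeland radius breaks the directional part of the argument.

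You take $\rho_k=\frac12 d_k$ with $d_k:=\operatorname{dist}(x_k,F^{-1}(\bar y_1,\bar y_2))$ and assert that $\rho_k=o(t_k)$. This is false in general. One only has $d_k\le\|x_k-\bar x\|=t_k$, and in typical failures $d_k$ is of order $t_k$. For example, take $X=\mathbb R$, $u=1$, $F_1\equiv\{\bar y_1\}$, $F_2(x)=\{x^2\}$, $\bar y_2=0$; then $d_k=t_k$ while $\varphi(x_k)=t_k^2$.

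Ekeland then only gives $\|\tilde x_k-x_k\|\le\rho_k\le t_k/2$, and nothing prevents a displacement of that size transversal to $u$. This has two consequences:
\begin{enumerate}
\item[(a)] $(\tilde x_k-\bar x)/t_k\to u$ is lost, so \eqref{eq:sequential_qualification_primal} cannot be derived.
\item[(b)] $\tilde x_k$ may leave the directional neighborhood on which everything in (i) is available: Lipschitz continuity of $\varphi$, subregularity of $F_1$ (which your exact penalization needs), and attainment of the distance.
\end{enumerate}
The radius must satisfy three requirements at once. It must be $<d_k$, so that $\tilde x_k\notin F^{-1}(\bar y_1,\bar y_2)$. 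It must be $o(t_k)$, so that the direction is preserved. It must be $\gg\varphi(x_k)$, so that the slope tends to $0$. The paper's key device is the geometric mean $\lambda_k=\sqrt{\varepsilon_k d_k}$ with $\varepsilon_k=\varphi(\check x_k)$. This gives $\lambda_k/d_k=\sqrt{\varepsilon_k/d_k}\to0$ and $\lambda_k/t_k=\sqrt{(\varepsilon_k/t_k)(d_k/t_k)}\to0$, with slope $q_k=\sqrt{\varepsilon_k/d_k}\to0$.

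A smaller point: $\varphi(x_k)=o(t_k)$ follows from $d_k\le t_k$ and the blow-up of the ratio. It does not follow from $0\in DF_2(\bar x\mid\bar y_2)(u)$, which only supplies some sequence of step sizes.

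Second, your exact-penalty coefficient is too small. Clarke penalization combined with subregularity needs a coefficient exceeding $\kappa_1$ times the Lipschitz modulus of the perturbed objective $\varphi+q_k\|\cdot-\tilde x_k\|$. That means essentially $c>\kappa_1L_2$; the paper takes $c>2\kappa_1L_2$ to allow approximate projections. With coefficient $\kappa_1$ alone the penalization can fail when $L_2>1$. This only changes your bound to $\|y_{1,k}^*\|\le c+o(1)$, so boundedness survives.

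With both repairs, your route is a legitimate variant of the paper's:
\begin{enumerate}
\item[(a)] You keep the hard constraint $x\in S_1$ in Ekeland, then penalize locally and lift to $\operatorname{gph}F_1$ via $\operatorname{dist}(\bar y_1,F_1(x))\le\|y_1-\bar y_1\|$. Positivity $\varphi(\tilde x_k)>0$ is then automatic from $\tilde x_k\in S_1\setminus F^{-1}(\bar y_1,\bar y_2)$.
\item[(b)] The paper instead applies Ekeland directly to $\Phi(x,y_1)=\varphi(x)+c\|y_1-\bar y_1\|$ on $\operatorname{gph}F_1$. It must then prove $\varphi(\tilde x_k)>0$ separately, using $c>2\kappa_1L_2$ and subregularity of $F_1$ at the Ekeland point.
\end{enumerate}
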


\begin{proof}
We proceed by contradiction. Suppose the assertion is false.  
By the negation of the directional metric subregularity of $F = (F_1, F_2)$ at $\bar{x}$ in the direction $u$, there exist sequences $t_k \downarrow 0$, $u_k \to u$, and $x_k := \bar{x} + t_k u_k$ such that 
\begin{equation} \label{eq:proof_contradiction_limit}
    \lim_{k \to \infty} \frac{\operatorname{dist}\big(x_k, F^{-1}(\bar{y}_1, \bar{y}_2)\big)}{\operatorname{dist}\big(\bar{y}_1, F_1(x_k)\big) + \operatorname{dist}\big(\bar{y}_2, F_2(x_k)\big)} = \infty.
\end{equation}

\textbf{Step 1: Primal sequences.} 
Noting that $\bar{y}_1 \in F_1(\bar{x})$ and $\bar{y}_2 \in F_2(\bar{x})$, we readily have $\operatorname{dist}\big(x_k, F^{-1}(\bar{y}_1, \bar{y}_2)\big) \le \|x_k - \bar{x}\| \le (\|u\| + 1)t_k$ for all sufficiently large $k \in \mathbb{N}$. Substituting this primal upper bound into \eqref{eq:proof_contradiction_limit} yields: 
\begin{equation} \label{eq:superlinear_rate}
    \lim_{k \to \infty} \frac{\operatorname{dist}\big(\bar{y}_1, F_1(x_k)\big) + \operatorname{dist}\big(\bar{y}_2, F_2(x_k)\big)}{t_k} = 0.
\end{equation}
By condition (i), $F_1$ is metrically subregular at $(\bar{x}, \bar{y}_1)$ in the direction $u$. Hence, there exists $\kappa_1 > 0$ such that, for all sufficiently large $k$, we can select a pull-back point $\check{x}_k \in F_1^{-1}(\bar{y}_1)$ satisfying the approximate distance estimate
\begin{equation} \label{eq:pullback_estimate}
    \|\check{x}_k - x_k\| \le 2\kappa_1 \, \operatorname{dist}\big(\bar{y}_1, F_1(x_k)\big).
\end{equation}
Invoking \eqref{eq:superlinear_rate}, we deduce that $\lim_{k\to\infty} \|\check{x}_k - x_k\|/t_k = 0$. Utilizing the triangle inequality and $(x_k - \bar{x})/t_k \to u$, the directional alignment of this pull-back sequence is preserved: $(\check{x}_k - \bar{x})/t_k \to u$. 

\textbf{Step 2: Variational sequences.}
Set $S_i:=F_i^{-1}(\bar{y}_i)$, $i=1,2$, and $S:=S_1\cap S_2$. Let $L_2$ be a directional Lipschitz modulus of $\varphi(x):=\operatorname{dist}(\bar{y}_2,F_2(x))$, 
and fix a constant $c>2\kappa_1L_2$. Indeed, after shrinking the directional neighborhood if necessary, the directional Lipschitz-like inclusion for $F_2$ implies
\[
    |\varphi(x)-\varphi(z)|\le L_2\|x-z\|
\]
for all $x,z$ in that neighborhood. On the closed set $\operatorname{gph}F_1$, consider
\[
    \Phi(x,y_1):=\varphi(x)+c\|y_1-\bar{y}_1\|.
\]
At the point $(\check x_k,\bar{y}_1)\in\operatorname{gph}F_1$, put $d_k:=\operatorname{dist}(\check x_k,S),\; \varepsilon_k:=\Phi(\check x_k,\bar{y}_1)=\varphi(\check x_k)$. 

We must have $\varepsilon_k > 0$ for all sufficiently large $k$. Otherwise, along a subsequence with $\varepsilon_k=0$, we would have $\check{x}_k \in F^{-1}(\bar{y}_1,\bar{y}_2)$ and hence
\[
    \operatorname{dist}\big(x_k,F^{-1}(\bar{y}_1,\bar{y}_2)\big)
    \le 2\kappa_1\,\operatorname{dist}\big(\bar{y}_1,F_1(x_k)\big).
\]
This would bound the ratio in \eqref{eq:proof_contradiction_limit} by $2\kappa_1$, a contradiction.

Furthermore, \eqref{eq:superlinear_rate} gives $\operatorname{dist}\big(\bar{y}_2, F_2(x_k)\big) = o(t_k)$. Since $F_2$ is directionally Lipschitz-like and $\|\check{x}_k-x_k\|=o(t_k)$, we obtain
\[
    \varepsilon_k
    \le \operatorname{dist}\big(\bar{y}_2,F_2(x_k)\big)
       +L_2\|\check{x}_k-x_k\|
    =o(t_k).
\]
The estimates above and \eqref{eq:proof_contradiction_limit} also give $\frac{d_k}{\varepsilon_k}\to\infty,\; \frac{\varepsilon_k}{t_k}\to0$. 
Indeed, $|d_k-\operatorname{dist}(x_k,S)|\le\|\check x_k-x_k\|$, whereas $\varepsilon_k\le
    \operatorname{dist}(\bar{y}_2,F_2(x_k))
    +L_2\|\check x_k-x_k\|$. Thus, for $D_k:=\operatorname{dist}(x_k,S)$, the failure relation gives $\operatorname{dist}(\bar{y}_1,F_1(x_k))+\operatorname{dist}(\bar{y}_2,F_2(x_k))=o(D_k)$, whence $d_k=D_k+o(D_k)$ and $\varepsilon_k=o(D_k)$.
Moreover, $d_k\le\|\check x_k-\bar{x}\|=O(t_k)$. Since
$\check x_k\in S_1$ and $\varepsilon_k=\varphi(\check x_k)>0$, we have
$\check x_k\notin S$ and hence $d_k>0$ for all sufficiently large $k$.
Define $\lambda_k:=\sqrt{\varepsilon_kd_k},\;
    q_k:=\frac{\varepsilon_k}{\lambda_k}
        =\sqrt{\frac{\varepsilon_k}{d_k}}$.  Then
\begin{equation}\label{eq:ekeland_parameter_limits}
    \frac{\lambda_k}{d_k}
        =\sqrt{\frac{\varepsilon_k}{d_k}}\to0,
    \qquad
    q_k\to0,
    \qquad
    \frac{\lambda_k}{t_k}
        =\sqrt{\frac{\varepsilon_k}{t_k}\frac{d_k}{t_k}}\to0.
\end{equation}

Since $\Phi\ge0$ on $\operatorname{gph}F_1$ and
$\Phi(\check x_k,\bar{y}_1)=\varepsilon_k$, Ekeland's variational
principle, applied with the parameters $\varepsilon_k$ and $\lambda_k$
to $\Phi$ on $\operatorname{gph}F_1$, yields a point
$(\tilde x_k,\tilde y_{1,k})\in\operatorname{gph}F_1$ with the following
three properties:

\begin{equation}\label{eq:ekeland_proximity}
    \|\tilde x_k-\check x_k\|
       +\|\tilde y_{1,k}-\bar{y}_1\|\le\lambda_k;
\end{equation}
\begin{equation}\label{eq:ekeland_value}
    \Phi(\tilde x_k,\tilde y_{1,k})
       \le \Phi(\check x_k,\bar{y}_1)=\varepsilon_k;
\end{equation}
for every
$(x,y_1)\in\operatorname{gph}F_1\setminus
\{(\tilde x_k,\tilde y_{1,k})\}$,
\begin{equation}\label{eq:ekeland_variational_inequality}
\begin{aligned}
    \Phi(\tilde x_k,\tilde y_{1,k})
    <{}\Phi(x,y_1)+q_k\bigl(\|x-\tilde x_k\|+\|y_1-\tilde y_{1,k}\|\bigr).
\end{aligned}
\end{equation}

We record separately the consequences of these three conclusions. First,
\eqref{eq:ekeland_proximity}, \eqref{eq:ekeland_parameter_limits}, and
$(\check x_k-\bar{x})/t_k\to u$ imply $\frac{\tilde x_k-\bar{x}}{t_k}\to u,\; \frac{\tilde y_{1,k}-\bar{y}_1}{t_k}\to0$. 
Thus the Ekeland point remains in the prescribed directional neighborhood
for all sufficiently large $k$. Second, \eqref{eq:ekeland_value} gives
\begin{equation}\label{eq:ekeland_value_consequences}
    0\le\varphi(\tilde x_k)\le\varepsilon_k=o(t_k),
    \qquad
    c\|\tilde y_{1,k}-\bar{y}_1\|
       \le\varepsilon_k=o(t_k).
\end{equation}
Third, \eqref{eq:ekeland_variational_inequality} says precisely that
$(\tilde x_k,\tilde y_{1,k})$ is a strict minimizer on
$\operatorname{gph}F_1$ of
\[
    (x,y_1)\mapsto
    \Phi(x,y_1)
       +q_k\bigl(\|x-\tilde x_k\|
                  +\|y_1-\tilde y_{1,k}\|\bigr),
\]
which will be used both in the following positivity argument and in the
subsequent fuzzy sum rule.

We next show that $\varphi(\tilde x_k)>0$ for all sufficiently large $k$.
Suppose to the contrary that $\varphi(\tilde x_k)=0$. If
$\tilde y_{1,k}=\bar{y}_1$, then $\tilde x_k\in S$, and
\eqref{eq:ekeland_proximity} contradicts
$\lambda_k<d_k$ for all sufficiently large $k$. If
$\tilde y_{1,k}\ne\bar{y}_1$, directional metric subregularity of $F_1$
gives $z_k\in S_1$ such that
\[
    \|z_k-\tilde x_k\|
    \le2\kappa_1\operatorname{dist}(\bar{y}_1,F_1(\tilde x_k))
    \le2\kappa_1\|\tilde y_{1,k}-\bar{y}_1\|.
\]
Using $(z_k,\bar{y}_1)\in\operatorname{gph}F_1$ as a competitor in
\eqref{eq:ekeland_variational_inequality} and using the directional
Lipschitz continuity of $\varphi$, we obtain
\[
    c\|\tilde y_{1,k}-\bar{y}_1\|
    \le
    \bigl(2\kappa_1L_2+(2\kappa_1+1)q_k\bigr)
    \|\tilde y_{1,k}-\bar{y}_1\|,
\]
which is impossible for all large $k$ because $c>2\kappa_1L_2$ and
$q_k\to0$. Hence $\varphi(\tilde x_k)>0$. 
We may now proceed with the normal construction. Choose $\rho_k>0$ such
that $\rho_k=o(t_k),\;\rho_k<\frac{\varphi(\tilde x_k)}{2(1+L_2)}$. 
Applying the fuzzy sum rule for Fr\'echet subdifferentials
\cite[Theorem~2.33]{Mordukhovich2006variational} to the strict minimum
obtained in \eqref{eq:ekeland_variational_inequality}, we obtain
$(x_{1,k},y_{1,k})\in\operatorname{gph}F_1$ and $x_{2,k}'\in X$, all
within $\rho_k$ of the corresponding Ekeland point, together with
\[
    (x_{1,k}^*,-y_{1,k}^*)\in
    \widehat N\bigl((x_{1,k},y_{1,k});\operatorname{gph}F_1\bigr),
    \qquad
    \xi_{2,k}^*\in\widehat\partial\varphi(x_{2,k}'),
\]
such that
\begin{equation}\label{eq:fuzzy_alignment_limit}
    \|x_{1,k}^*+\xi_{2,k}^*\|
    \le q_k+\rho_k\to0,
    \qquad
    \|y_{1,k}^*\|\le c+q_k+\rho_k.
\end{equation}
Moreover, the choice of $\rho_k$ gives $\varphi(x_{2,k}')
    \ge\varphi(\tilde x_k)-L_2\rho_k>0$. 

Choose a projection $\bar{y}_{2,k}'\in F_2(x_{2,k}')$ of $\bar{y}_2$, which exists by assumption. Since $\varphi(x_{2,k}')>0$, the standard fuzzy marginal rule, obtained by applying the same fuzzy sum rule to the lifted function
\[
    (x,y_2)\mapsto\|y_2-\bar{y}_2\|
       +\delta_{\operatorname{gph}F_2}(x,y_2),
\]
can be applied with errors that are $o(t_k)$ and $o(\varphi(x_{2,k}'))$. It yields $(x_{2,k},y_{2,k})\in\operatorname{gph}F_2$ and $(\widehat x_{2,k}^*,-\widehat y_{2,k}^*)\in\widehat N((x_{2,k},y_{2,k});\operatorname{gph}F_2)$ such that
\begin{align}
    &\|x_{2,k}-x_{2,k}'\|+\|y_{2,k}-\bar{y}_{2,k}'\|=o(t_k), \label{eq:second_graph_proximity}\;\\
    &\|\widehat x_{2,k}^*-\xi_{2,k}^*\|\to0,
      \qquad \|\widehat y_{2,k}^*\|\to1, \label{eq:second_graph_dual_approximation}\\
    &\left\langle
       \frac{\widehat y_{2,k}^*}{\|\widehat y_{2,k}^*\|},
       \frac{y_{2,k}-\bar{y}_2}{\|y_{2,k}-\bar{y}_2\|}
      \right\rangle\to1. \label{eq:second_graph_alignment}
\end{align}
The last two relations follow from the fact that every supporting functional $p\in\widehat\partial\|\,\cdot-\bar{y}_2\|(y)$ at $y\ne\bar{y}_2$ satisfies $\|p\|=1$ and $\langle p,y-\bar{y}_2\rangle=\|y-\bar{y}_2\|$, while the fuzzy evaluation points are chosen within $o(\varphi(x_{2,k}'))$ of the projection. In particular, $y_{2,k}\ne\bar{y}_2$ for all sufficiently large $k$. Since $\varphi$ is $L_2$-Lipschitz, $\|\xi_{2,k}^*\|\le L_2$; hence $\{\widehat x_{2,k}^*\}$ is bounded. By the conic property of the Fr\'echet normal cone, we set
\begin{equation}\label{eq:second_graph_normals}
    x_{2,k}^*:=\frac{\widehat x_{2,k}^*}{\|\widehat y_{2,k}^*\|},
    \qquad
    y_{2,k}^*:=\frac{\widehat y_{2,k}^*}{\|\widehat y_{2,k}^*\|},
\end{equation}
so that $(x_{2,k}^*,-y_{2,k}^*)$ is a Fr\'echet normal to $\operatorname{gph}F_2$ and $\|y_{2,k}^*\|=1$. Equations \eqref{eq:fuzzy_alignment_limit} and \eqref{eq:second_graph_dual_approximation} give $\|x_{1,k}^*+x_{2,k}^*\|\to0$, while \eqref{eq:second_graph_alignment} yields the distance alignment and, in particular, $\langle y_{2,k}^*,y_{2,k}-\bar{y}_2\rangle>0$ 
for all sufficiently large $k$. Finally,
\eqref{eq:ekeland_value}, $\rho_k=o(t_k)$, and
\eqref{eq:second_graph_proximity} show that
$(y_{2,k}-\bar{y}_2)/t_k\to0$.

\textbf{Step 3: Dual conclusion.}
The proximity estimates give \eqref{eq:sequential_qualification_primal}. Lemma~\ref{lem:normal_cone_G_representation} then yields the decoupled inclusions \eqref{eq:sequential_decoupled_normals}. Equations \eqref{eq:fuzzy_alignment_limit}--\eqref{eq:second_graph_normals} give the normal balance, the boundedness of $\{y_{1,k}^*\}$, and the normalization of $\{y_{2,k}^*\}$; and \eqref{eq:second_graph_alignment} gives the strict pairing in \eqref{eq:joint_directional_pseudo_normality}. Thus the constructed sequences violate joint directional pseudo-normality, contradicting condition~(ii).  
\end{proof}

The following corollary gives the special case where \(Y_2\) is
finite-dimensional.
\begin{corollary}\label{coro:directional_mscq_intersection_finite}
Let $X$ and $Y_1$ be Asplund spaces, let $Y_2=\mathbb R^d$. 
Let $(\bar{x}, \bar{y}_1, \bar{y}_2)$ be a reference point satisfying $\bar{y}_1 \in F_1(\bar{x})$ and $\bar{y}_2 \in F_2(\bar{x})$. Suppose that $u \in X$ satisfies \(0\in DF_i(\bar{x}\mid \bar{y}_i)(u)\) for $i=1,2$. Assume that the following conditions hold:

\begin{enumerate}
\item[(i)] $F_1$ is metrically subregular at $(\bar{x},\bar{y}_1)$ in the direction $u$, and $F_2$ is directionally Lipschitz-like at $(\bar{x}, \bar{y}_2)$ in the direction $u$.

\item[(ii)] Joint directional quasi-normality of $F_1$ and $F_2$ holds at $(\bar{x},\bar{y}_1,\bar{y}_2)$ in the direction $u$.
\end{enumerate}

Then, $F$ is metrically subregular at $\big(\bar{x}, (\bar{y}_1, \bar{y}_2)\big)$ in the direction $u$.  
\end{corollary}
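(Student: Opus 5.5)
I will rerun the contradiction argument of Theorem~\ref{thm:directional_mscq_intersection} with $Y_2=\mathbb R^d$, changing only the places where the infinite-dimensional hypotheses were used. Assume $F$ is not metrically subregular at $\bigl(\bar{x},(\bar{y}_1,\bar{y}_2)\bigr)$ in the direction $u$. This gives sequences $t_k\downarrow0$, $u_k\to u$ with \eqref{eq:proof_contradiction_limit}.

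Step~1 is unchanged. The directional metric subregularity of $F_1$ produces pull-back points $\check x_k\in F_1^{-1}(\bar{y}_1)$ with $(\check x_k-\bar{x})/t_k\to u$.

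In Step~2 the extra attainment hypothesis of Theorem~\ref{thm:directional_mscq_intersection}(i) is now automatic. The graph of $F_2$ is closed, so $F_2(x)$ is closed in $\mathbb R^d$, and a projection $\bar{y}_{2,k}'\in F_2(x_{2,k}')$ of $\bar{y}_2$ exists by local compactness. The directional Lipschitz-like property of $F_2$ still makes $\varphi=\operatorname{dist}(\bar{y}_2,F_2(\cdot))$ directionally Lipschitz. Hence I will apply Ekeland's principle to $\Phi$ on $\operatorname{gph}F_1$, then the fuzzy sum rule, then the fuzzy marginal rule exactly as before. The outcome is sequences satisfying \eqref{eq:sequential_qualification_primal}, \eqref{eq:sequential_qualification_dual}, the normalization $\|y_{2,k}^*\|=1$ and the alignment
\[
\Bigl\langle y_{2,k}^*,\tfrac{y_{2,k}-\bar{y}_2}{\|y_{2,k}-\bar{y}_2\|}\Bigr\rangle\to1 .
\]
Lemma~\ref{lem:normal_cone_G_representation} then gives \eqref{eq:sequential_decoupled_normals}.

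Step~3 replaces the pseudo-normality sign condition by the componentwise quasi-normality one. The unit sphere of $\mathbb R^d$ is compact, so after passing to a subsequence $y_{2,k}^*\to y_2^*$ in norm with $\|y_2^*\|=1$. This supplies the limit required in Definition~\ref{defn:joint_directional_pseudo_normality}(ii). It remains to show that for each $j$ with $[y_2^*]_j\neq0$ we have $[y_2^*]_j[y_{2,k}-\bar{y}_2]_j>0$ for large $k$. This is the step I expect to be the main obstacle, since alignment of inner products does not immediately give signs of individual components.

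My plan for this step is to use the structure of the normalized direction. Set $w_k:=(y_{2,k}-\bar{y}_2)/\|y_{2,k}-\bar{y}_2\|$. After a further subsequence, $w_k\to w$ with $\|w\|=1$ and $\langle y_2^*,w\rangle=1$, by norm convergence of both factors. Equality in the dual-norm inequality $\langle y_2^*,w\rangle\le\|y_2^*\|\,\|w\|$ means $y_2^*$ is a subgradient of the norm at $w$. For the Euclidean norm this forces $w=y_2^*$. Then $[y_2^*]_j[w_k]_j\to[y_2^*]_j^2>0$, and multiplying by $\|y_{2,k}-\bar{y}_2\|>0$ gives the required strict componentwise inequality for large $k$.

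If the norm on $Y_2$ is not Euclidean, I would first pass to the Euclidean norm. Norms on $\mathbb R^d$ are equivalent, so directional metric subregularity is unaffected, and the fuzzy marginal rule can be run with the Euclidean distance in the same way. The resulting sequences then violate joint directional quasi-normality, contradicting (ii) and proving the corollary.
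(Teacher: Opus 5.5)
Your proposal is correct and follows essentially the same route as the paper. The paper reruns Theorem~\ref{thm:directional_mscq_intersection}, treating attainment of $\operatorname{dist}(\bar{y}_2,F_2(x))$ as automatic in $\mathbb R^d$; it also notes explicitly that $F_2(x)\neq\emptyset$ near $\bar x$ along $u$ by the Lipschitz-like property, which you only use implicitly. It then extracts $y_{2,k}^*\to y_2^*$ with $\|y_2^*\|=1$ and uses the Euclidean alignment to get the componentwise signs. The one cosmetic difference is that the paper obtains $y_{2,k}^*-w_k\to0$ directly from $\|y_{2,k}^*-w_k\|^2=2-2\langle y_{2,k}^*,w_k\rangle\to0$, rather than passing to a further subsequence and using the equality case of Cauchy--Schwarz.
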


\begin{proof}
Because the values of $F_2$ are closed subsets of $\mathbb R^d$, the distance to every nonempty value is attained; local nonemptiness follows from the directional Lipschitz-like property. If metric subregularity failed, the proof of Theorem~\ref{thm:directional_mscq_intersection} would therefore produce the sequences in Definition~\ref{defn:joint_directional_pseudo_normality}(i). Passing to a subsequence, $y_{2,k}^*\to y_2^*$ with $\|y_2^*\|=1$. With the Euclidean norm, the additional alignment relation \eqref{eq:second_graph_alignment} obtained in that proof gives
\[
    \left\|y_{2,k}^*-
    \frac{y_{2,k}-\bar{y}_2}{\|y_{2,k}-\bar{y}_2\|}\right\|^2
    =2-2\left\langle y_{2,k}^*,
    \frac{y_{2,k}-\bar{y}_2}{\|y_{2,k}-\bar{y}_2\|}\right\rangle\to0.
\]
Hence, for every $j$ with $[y_2^*]_j\ne0$, the factors $[y_2^*]_j$ and $[y_{2,k}-\bar{y}_2]_j$ have the same sign for all sufficiently large $k$. The sequences therefore violate joint directional quasi-normality, a contradiction.  
\end{proof}

Theorem~\ref{thm:directional_mscq_intersection} and Corollary~\ref{coro:directional_mscq_intersection_finite} extend the test for directional metric subregularity to Asplund spaces. The sequential qualification rules out multiplier sequences whose primal residuals vanish at a first-order rate, \(\frac{P_i(x_{i,k})-s_{i,k}}{t_k}\to 0\), rather than all sequences satisfying the residual condition \(P_i(x_{i,k})-s_{i,k}\to 0\). 
Corollary~\ref{coro:intersection_mscq_finite_differentiable} below is
a directional version of \cite[Theorem~2]{gfrerer2017new} in which
\(X\) is allowed to be an Asplund space.

\begin{corollary}\label{coro:intersection_mscq_finite_differentiable}
Let $X$ be an Asplund space, and let $Y_1,Y_2$ be finite-dimensional. 
Let $P_i: X \to Y_i$ be continuously Fr\'echet differentiable at $\bar{x}$ and let $\Lambda_i \subset Y_i$ be closed for $i = 1, 2$, with $\bar{y}_i:=0\in F_i(\bar{x}):=P_i(\bar{x})-\Lambda_i,\; i=1,2$. 
Let $u \in X$ satisfy $\nabla P_i(\bar{x})(u)\in T(P_i(\bar{x});\Lambda_i)$ for $i=1,2$. Assume that $F_1$ is metrically subregular at $(\bar{x},0)$ in the direction $u$ and that
\begin{equation*}
    \left.
    \begin{aligned}
        &[\nabla P_1(\bar{x})]^*y_1^*
        +[\nabla P_2(\bar{x})]^*y_2^*=0,\\
        &y_i^*\in
        N\bigl(P_i(\bar{x});\Lambda_i;\nabla P_i(\bar{x})u\bigr),
        \qquad i=1,2,
    \end{aligned}
    \right\}
    \quad\Longrightarrow\quad y_2^*=0.
\end{equation*}
Then $F$ is metrically subregular at
\(\bigl(\bar{x},(0,0)\bigr)\) in the direction $u$.
\end{corollary}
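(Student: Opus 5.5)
We derive the statement from Corollary~\ref{coro:directional_mscq_intersection_finite}, so we check its hypotheses and then argue by contradiction against joint directional quasi-normality.

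\textbf{Hypotheses of the corollary.} Since $P_i$ is continuously Fr\'echet differentiable at $\bar{x}$, it is Lipschitz on a neighbourhood of $\bar{x}$, hence directionally Lipschitz continuous along $u$. Proposition~\ref{prop:lipschitz_implication} then gives that $F_2=P_2-\Lambda_2$ is directionally Lipschitz-like at $(\bar{x},0)$ in the direction $u$. Next we show $0\in DF_i(\bar{x}\mid 0)(u)$. From $\nabla P_i(\bar{x})u\in T(P_i(\bar{x});\Lambda_i)$ we pick $\tau_k\downarrow0$ and $s_k\in\Lambda_i$ with $(s_k-P_i(\bar{x}))/\tau_k\to\nabla P_i(\bar{x})u$. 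Then $y_k:=P_i(\bar{x}+\tau_ku)-s_k\in F_i(\bar{x}+\tau_ku)$ and $y_k/\tau_k\to0$, as required. Metric subregularity of $F_1$ is assumed. It therefore remains to verify joint directional quasi-normality.

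\textbf{Contradiction setup.} Suppose there are sequences satisfying \eqref{eq:sequential_qualification_primal}--\eqref{eq:sequential_qualification_dual}, with $y_{2,k}^*\to y_2^*$, together with the sign condition. Since $\|y_{2,k}^*\|=1$ and $Y_2$ is finite-dimensional, $\|y_2^*\|=1$. Also $\sup_k\|y_{1,k}^*\|<\infty$, so a subsequence gives $y_{1,k}^*\to y_1^*$.

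\textbf{Passing to the limit.}
\begin{itemize}
\item Because $P_i$ is $C^1$ near $\bar{x}$ and $x_{i,k}\to\bar{x}$, the Fr\'echet coderivative is single-valued: $x_{i,k}^*=\nabla P_i(x_{i,k})^*y_{i,k}^*$. Then $\|x_{1,k}^*+x_{2,k}^*\|\to0$ and norm continuity of $\nabla P_i$ give $\nabla P_1(\bar{x})^*y_1^*+\nabla P_2(\bar{x})^*y_2^*=0$.
\item The normals satisfy $y_{i,k}^*\in\widehat N(s_{i,k};\Lambda_i)$ with $s_{i,k}=P_i(x_{i,k})-y_{i,k}$. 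Write $x_{i,k}=\bar{x}+t_ku_{i,k}$ with $u_{i,k}\to u$. By Fr\'echet differentiability and $(y_{i,k}-\bar y_i)/t_k\to0$,
\[
\frac{s_{i,k}-P_i(\bar{x})}{t_k}\to\nabla P_i(\bar{x})u.
\]
The definition of the directional normal cone then gives $y_i^*\in N(P_i(\bar{x});\Lambda_i;\nabla P_i(\bar{x})u)$.
\end{itemize}
The assumed implication now forces $y_2^*=0$, contradicting $\|y_2^*\|=1$. Hence joint directional quasi-normality holds, and Corollary~\ref{coro:directional_mscq_intersection_finite} yields the conclusion.

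\textbf{Main obstacle.} The delicate step is the coderivative limit, where we need norm convergence $\nabla P_i(x_{i,k})\to\nabla P_i(\bar{x})$. This requires reading ``continuously Fr\'echet differentiable at $\bar{x}$'' as differentiable near $\bar{x}$ with derivative continuous at $\bar{x}$. We also need the identification $\widehat D_N^*P_i(x)(y^*)=\{\nabla P_i(x)^*y^*\}$ at points of Fr\'echet differentiability. The strict sign condition is never used: the assumed implication excludes every nonzero limit $y_2^*$, so the argument in fact rules out a stronger class of sequences.
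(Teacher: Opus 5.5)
Your proposal is correct and follows essentially the same route as the paper's proof. You refute joint directional quasi-normality by passing to norm limits $y_{i,k}^*\to y_i^*$ with $\|y_2^*\|=1$. You use $x_{i,k}^*=[\nabla P_i(x_{i,k})]^*y_{i,k}^*$ and $(s_{i,k}-P_i(\bar{x}))/t_k\to\nabla P_i(\bar{x})u$ to reach the premise of the assumed implication, and then invoke Corollary~\ref{coro:directional_mscq_intersection_finite}. Your explicit checks that $F_2$ is directionally Lipschitz-like (via Proposition~\ref{prop:lipschitz_implication}) and that $0\in DF_i(\bar{x}\mid 0)(u)$ fill in hypotheses of that corollary which the paper leaves implicit.
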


\begin{proof}
It suffices to verify joint directional quasi-normality. Suppose, to the contrary, that it fails. Since \(Y_1\) and \(Y_2\) are finite-dimensional, the multiplier sequences in
Definition~\ref{defn:joint_directional_pseudo_normality}(ii) admit a subsequence such that $y_{i,k}^*\to y_i^*,\; i=1,2,\; \|y_2^*\|=1$. 
The sequential normal relations and the continuous differentiability
of \(P_i\) yield
\[
    y_i^*\in
    N\bigl(P_i(\bar{x});\Lambda_i;\nabla P_i(\bar{x})u\bigr),
    \qquad
    x_{i,k}^*=[\nabla P_i(x_{i,k})]^*y_{i,k}^*,
    \qquad i=1,2.
\]
Thus, $0=\lim_{k\to\infty}(x_{1,k}^*+x_{2,k}^*)=[\nabla P_1(\bar{x})]^*y_1^*+[\nabla P_2(\bar{x})]^*y_2^*$. 
The assumed implication gives \(y_2^*=0\), contradicting
\(\|y_2^*\|=1\). Hence joint directional quasi-normality holds, and the
conclusion follows from
Corollary~\ref{coro:directional_mscq_intersection_finite}.
 
\end{proof}

\section{Directional Optimality Conditions via Joint Directional Metric Subregularity}\label{sec:applications}
Based on these foundations, we now derive directional necessary
optimality conditions for \eqref{eq:ocpec}. Since
\(F_i=P_i-\Lambda_i\), feasibility of \(\bar{x}\) means
\(0\in F_i(\bar{x})\); throughout this section, we therefore set
\(\bar{y}_i:=0\) for \(i=1,2\). For \(P:=(P_1,P_2)\) and
\(\Lambda:=\Lambda_1\times\Lambda_2\), we have the following theorem.

\begin{theorem}\label{thm:optimality_condition_ocpec}
Suppose that \(\bar{x}\) is a local minimizer of
\eqref{eq:ocpec}. Let \(u\) be a critical direction of \eqref{eq:ocpec} at \(\bar{x}\).
Suppose that each \(P_i\) is weak\(^*\) strictly Lipschitzian at \(\bar{x}\) and Hadamard directionally differentiable there in the direction \(u\). Write $v_i:=(P_i)'_H(\bar{x};u),\; i=1,2$ (i.e., $(v_1,v_2)=P'_H(\bar{x};u)
    \in T\bigl(P(\bar{x});\Lambda\bigr),\;J'_+(\bar{x};u)\leq0$). 

Assume further that the following conditions hold:

\begin{enumerate}
    \item[(i)]
    \(F_1\) is metrically subregular at
    \((\bar{x},0)\) in the direction \(u\);

    \item[(ii)]
    \(F_2\) is directionally Lipschitz-like at
    \((\bar{x},0)\) in the direction \(u\), and
    \(\operatorname{dist}(0,F_2(x))\) is attained whenever \(x\) belongs to a sufficiently small directional neighborhood
    of \(\bar{x}\) along \(u\);

    \item[(iii)]
    joint directional pseudo-normality or quasi-normality holds at
    \((\bar{x},0,0)\) in the direction \(u\).
\end{enumerate}

Then there exists
$(z_1^*,z_2^*)\in N\bigl(P_1(\bar{x});\Lambda_1;v_1\bigr)
    \times N\bigl(P_2(\bar{x});\Lambda_2;v_2\bigr)$ such that
\begin{equation}\label{eq:joint_optimality_formula}
    0\in
    \partial J(\bar{x};u)
    +\partial\langle z_1^*,P_1\rangle(\bar{x};u)
    +\partial\langle z_2^*,P_2\rangle(\bar{x};u).
\end{equation}
\end{theorem}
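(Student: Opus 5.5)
The plan is to reduce Theorem~\ref{thm:optimality_condition_ocpec} to Theorem~\ref{thm:further_optimality_mscq}, applied to the stacked data. Take $Y:=Y_1\times Y_2$ (an Asplund space) with the sum norm, $P:=(P_1,P_2)$, $\Lambda:=\Lambda_1\times\Lambda_2$ (closed), and $f:=J$. Then $G(x)=P(x)-\Lambda=(F_1(x),F_2(x))=F(x)$, and problem \eqref{eq:ocpec} coincides with \eqref{eq:basic_model}.

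First, I would check that $u$ is a critical direction in the sense of Definition~\ref{defn:critical_direction} for the stacked problem. With the sum norm,
\[
\operatorname{dist}(P(x),\Lambda)=\operatorname{dist}(P_1(x),\Lambda_1)+\operatorname{dist}(P_2(x),\Lambda_2),
\]
so the residual condition for $P$ is equivalent to the two componentwise conditions.

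Next come the remaining hypotheses of Theorem~\ref{thm:further_optimality_mscq}.
\begin{itemize}
\item $P$ is Hadamard directionally differentiable at $\bar{x}$ in the direction $u$, with $P'_H(\bar{x};u)=(v_1,v_2)=:\bar v$, because the limit can be taken componentwise.
\item $\bar v\in T(P(\bar x);\Lambda)$ holds by assumption. It also follows, as in the proof of Theorem~\ref{thm:sufficient_condition_for_mscq}, from criticality together with Hadamard differentiability.
\item $P$ is weak$^*$ strictly Lipschitzian: any $y_k^*\xrightarrow{w^*}0$ in $(Y_1\times Y_2)^*=Y_1^*\times Y_2^*$ splits into components with $y_{i,k}^*\xrightarrow{w^*}0$. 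Hence $\langle y_k^*,q_k\rangle=\langle y_{1,k}^*,q_{1,k}\rangle+\langle y_{2,k}^*,q_{2,k}\rangle\to0$.
\end{itemize}

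The key hypothesis is directional metric subregularity of $G=F$ at $(\bar x,0)$ in direction $u$, and I would get it from Theorem~\ref{thm:directional_mscq_intersection}. Its assumption (i) is exactly (i)--(ii) here. Joint pseudo-normality gives its (ii) directly. In the quasi-normality case with $Y_2=\mathbb R^d$, I would use Corollary~\ref{coro:directional_mscq_intersection_finite}. I would also verify $0\in DF_i(\bar x\mid 0)(u)$. This holds because $v_i\in T(P_i(\bar x);\Lambda_i)$ gives $\lambda_k\in\Lambda_i$ with $(\lambda_k-P_i(\bar x))/t_k\to v_i$. Then $y_k:=P_i(\bar x+t_ku)-\lambda_k\in F_i(\bar x+t_ku)$ satisfies $y_k/t_k\to v_i-v_i=0$.

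Theorem~\ref{thm:further_optimality_mscq} then yields $z^*=(z_1^*,z_2^*)\in N(P(\bar x);\Lambda;\bar v)$ with
\[
0\in\partial J(\bar x;u)+\partial\langle z^*,P\rangle(\bar x;u).
\]
Proposition~\ref{prop:dir_normal_product} places $z^*$ in $N(P_1(\bar x);\Lambda_1;v_1)\times N(P_2(\bar x);\Lambda_2;v_2)$.

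The main obstacle is the final splitting
\[
\partial\bigl(\langle z_1^*,P_1\rangle+\langle z_2^*,P_2\rangle\bigr)(\bar x;u)\subset\partial\langle z_1^*,P_1\rangle(\bar x;u)+\partial\langle z_2^*,P_2\rangle(\bar x;u).
\]
Both summands are locally Lipschitz scalar functions on an Asplund space, so I would invoke the directional sum rule \cite[Theorem 2.13]{mao2025directional}, whose qualification is automatic under Lipschitz continuity. I expect the delicate point to be that this rule needs the fuzzy evaluation points of the two summands to be kept along a common direction. If it only produces a looser localization, I would instead argue from the sequences directly: use Hadamard differentiability of each $P_i$ in direction $u$ so that the perturbed points $\bar x+t_ku_k'$ stay directional, and use local Lipschitz continuity to get weak$^*$ boundedness of the separate subgradients, which lets me extract weak$^*$ convergent subsequences.
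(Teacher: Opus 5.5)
Your proposal is correct and follows the paper's proof. You stack the constraints, obtain directional metric subregularity of $F=(F_1,F_2)$ from Theorem~\ref{thm:directional_mscq_intersection}, apply Theorem~\ref{thm:further_optimality_mscq} with $G=F$, and then split via Proposition~\ref{prop:dir_normal_product} and the directional sum rule for locally Lipschitz functions. Your extra checks fill in details the paper leaves implicit: criticality and weak$^*$ strict Lipschitzness of the stacked $P$, the hypothesis $0\in DF_i(\bar{x}\mid 0)(u)$, and routing the quasi-normality case through Corollary~\ref{coro:directional_mscq_intersection_finite}.
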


\begin{proof}
    By the standing assumptions and Theorem~\ref{thm:directional_mscq_intersection}, the joint constraint mapping $F := (F_1, F_2)$ is metrically subregular at $\bigl(\bar{x},(0,0)\bigr)$ in the direction $u$. This is the setting of Theorem~\ref{thm:further_optimality_mscq} with $G=F$. Set $P := (P_1, P_2)$, $v := (v_1, v_2)$, and $\Lambda := \Lambda_1 \times \Lambda_2$. Theorem~\ref{thm:further_optimality_mscq} then yields a multiplier $z^* := (z_1^*, z_2^*) \in N\big(P(\bar{x}); \Lambda; v\big)$ such that
    \begin{equation*}
        0 \in \partial J(\bar{x}; u) + D_N^* P\big(\bar{x}; (u, v)\big)(z^*) = \partial J(\bar{x}; u) + \partial \langle z^*, P \rangle (\bar{x}; u).
    \end{equation*}
    By virtue of the product rule for directional limiting normal cones established in Proposition~\ref{prop:dir_normal_product}, we have the inclusion $N\big(P(\bar{x}); \Lambda; v\big) \subset N\big(P_1(\bar{x}); \Lambda_1; v_1\big) \times N\big(P_2(\bar{x}); \Lambda_2; v_2\big)$. Furthermore, since $P_1$ and $P_2$ are locally Lipschitz around $\bar{x}$, the component functions $\langle z_i^*, P_i \rangle$ are locally Lipschitz, allowing us to apply the directional subdifferential sum rule (see, e.g., \cite[Theorem~2.13]{mao2025directional}). This inclusion-based calculus guarantees that the joint subdifferential satisfies $\partial \langle z^*, P \rangle (\bar{x}; u) \subset \partial \langle z_1^*, P_1 \rangle (\bar{x}; u) + \partial \langle z_2^*, P_2 \rangle (\bar{x}; u)$. Combining these inclusions yields the desired relation in \eqref{eq:joint_optimality_formula}.  
\end{proof}

\begin{remark}
     In many applications, the mappings $P_1$ and $P_2$ need not simultaneously satisfy strict Lipschitz or Hadamard directional differentiability assumptions. For instance, the value-function reformulation of a bilevel program typically introduces a nonsmooth constraint characterized by the lower-level value function \cite{ye1995necessary,Ye1995,ye1997optimal}. In that specific setting, $Y_2 = \mathbb{R}$, rendering such differentiability conditions unnecessary. 
\end{remark}

The following example illustrates the finite-dimensional differentiable
criterion in Corollary~\ref{coro:intersection_mscq_finite_differentiable}
for a value-function reformulation of a bilevel program. Although
ordinary metric subregularity fails, the corollary yields directional
metric subregularity.

\begin{example}\label{ex:bilevel_finite_directional_mscq}
Let \(H:=\ell^2\), and let \(e_1\) be its first canonical basis
vector. Consider the variable
\(z=(x,y,w)\in Z:=\mathbb R\times\mathbb R\times H\), where \(Z\) is
endowed with its Hilbert product norm, and consider the bilevel problem
\begin{equation}\label{eq:bilevel_joint_example}
    \begin{aligned}
        \min_{\substack{x,y\in\mathbb R\\w\in H}}\quad
        &J(x,y,w):=x^2-y+\langle e_1,w\rangle+\|w\|^2,\\
        \text{s.t.}\quad
        &y\in S(x),\qquad y=0,\qquad \langle e_1,w\rangle=0,
    \end{aligned}
\end{equation}
where \(S(x)\) is the solution mapping of the lower-level problem $\min_{\eta\leq0}(\eta-x)^2$. 

The lower-level solution mapping and value function are, respectively,
\[
    S(x)=
    \begin{cases}
        \{x\}, & x\leq0,\\
        \{0\}, & x>0,
    \end{cases}
    \qquad
    V(x)=(x_+)^2,
\]
where \(x_+:=\max\{x,0\}\). Define $h(x,y):=(y-x)^2-V(x)=(y-x)^2-(x_+)^2$. The value-function reformulation of the bilevel constraints is
\[
    y=0,\qquad y\leq0,\qquad h(x,y)\leq0,\qquad
    \langle e_1,w\rangle=0.
\]
Indeed, for every \(y\leq0\), the definition of \(V\) gives
\(h(x,y)\geq0\); hence \(h(x,y)\leq0\) holds if and only if
\(y\in S(x)\). Set
\[
    \begin{aligned}
        &P_1(z):=(y,y,h(x,y)),
        &&\Lambda_1:=\{0\}\times\mathbb R_-\times\mathbb R_-,\\
        &P_2(z):=\langle e_1,w\rangle,
        &&\Lambda_2:=\{0\}.
    \end{aligned}
\]
Thus, with \(F_i:=P_i-\Lambda_i\), \(i=1,2\), and
\(F:=(F_1,F_2)\), problem \eqref{eq:bilevel_joint_example} is exactly
\[
    \min_{z\in Z}J(z)
    \qquad\text{subject to}\qquad
    P_1(z)\in\Lambda_1,\quad P_2(z)\in\Lambda_2.
\]
Moreover, $F^{-1}(0,0)=\mathcal F=\{(x,0,w)\mid x\geq0,\ \langle e_1,w\rangle=0\}$. Consequently, \(\bar{z}:=(0,0,0)\) is a strict local minimizer because
\[
    J(x,0,w)=x^2+\|w\|^2>0=J(\bar{z})
    \qquad\text{for every }(x,0,w)\in\mathcal F\setminus\{\bar{z}\}.
\]

We first show that ordinary metric subregularity fails at \(\bar{z}\). For
\(t>0\), let \(z_t:=(-t,0,0)\). Since $P_1(z_t)=(0,0,t^2),\; P_2(z_t)=0$, we have
\[
    \operatorname{dist}\bigl(z_t,F^{-1}(0,0)\bigr)=t,
    \qquad
    \operatorname{dist}\bigl(0,F_1(z_t)\bigr)=t^2,
    \qquad
    \operatorname{dist}\bigl(0,F_2(z_t)\bigr)=0.
\]
It follows that
\[
    \frac{\operatorname{dist}\bigl(z_t,F^{-1}(0,0)\bigr)}
    {\operatorname{dist}\bigl(0,F_1(z_t)\bigr)
     +\operatorname{dist}\bigl(0,F_2(z_t)\bigr)}
    =\frac1t\longrightarrow\infty.
\]
Therefore, ordinary metric subregularity fails at \((\bar{z},(0,0))\).

Next, consider the direction \(\bar{u}:=(1,0,0)\). We verify the
assumptions of
Corollary~\ref{coro:intersection_mscq_finite_differentiable}. The image
spaces \(Y_1=\mathbb R^3\) and \(Y_2=\mathbb R\) are
finite-dimensional. Moreover, \(P_1\) is continuously Fr\'echet
differentiable, whereas \(P_2\) is linear. For
\(d=(a,b,c)\in Z\),
\begin{equation}\label{eq:example_derivatives}
    \nabla P_1(\bar{z})d=(b,b,0),
    \qquad
    \nabla P_2(\bar{z})d=\langle e_1,c\rangle.
\end{equation}
In particular,
\(\nabla P_1(\bar{z})\bar{u}=0\) and
\(\nabla P_2(\bar{z})\bar{u}=0\). Moreover, since
\(\nabla J(\bar{z})(\bar{u})=\langle(0,-1,e_1),(1,0,0)\rangle=0\),
\(\bar{u}\) is a critical direction.

We next verify the directional metric subregularity of \(F_1\). Its
solution set is $F_1^{-1}(0)=\{(x,0,w)\mid x\geq0,\ w\in H\}$. 
Every \(z=(x,y,w)\) in a sufficiently small $\mathcal{V}(\bar{z};\bar{u})$ satisfies \(x>0\). Hence $\operatorname{dist}\bigl(z,F_1^{-1}(0)\bigr)=|y|$. 

Since the first component of \(P_1\) represents the equality
constraint \(y=0\),
\[
    \operatorname{dist}\bigl(0,F_1(z)\bigr)
    =\operatorname{dist}\bigl(P_1(z),\Lambda_1\bigr)
    \geq |y|.
\]
Thus, \(F_1\) is metrically subregular at \((\bar{z},0)\) in the
direction \(\bar{u}\). Since
both directional image vectors in \eqref{eq:example_derivatives} are
zero,
\[
    N\bigl(P_1(\bar{z});\Lambda_1;\nabla P_1(\bar{z})\bar{u}\bigr)
       =\mathbb R\times\mathbb R_+\times\mathbb R_+,
    \qquad
    N\bigl(P_2(\bar{z});\Lambda_2;\nabla P_2(\bar{z})\bar{u}\bigr)
       =\mathbb R.
\]
Let \(y_1^*:=(\alpha,\beta,\gamma)\) with
\(\beta,\gamma\geq0\), and let \(y_2^*:=\delta\in\mathbb R\). By
\eqref{eq:example_derivatives},
\[
    [\nabla P_1(\bar{z})]^*y_1^*
       =(0,\alpha+\beta,0),
    \qquad
    [\nabla P_2(\bar{z})]^*y_2^*
       =(0,0,\delta e_1).
\]
Therefore, $[\nabla P_1(\bar{z})]^*y_1^*
    +[\nabla P_2(\bar{z})]^*y_2^*=0
    \; \Longrightarrow\;
    \alpha+\beta=0,\; \delta=0$, 
and hence \(y_2^*=0\), as required by the corollary. We conclude from
Corollary~\ref{coro:intersection_mscq_finite_differentiable} that
\(F=(F_1,F_2)\) is metrically subregular at
\((\bar{z},(0,0))\) in the direction \(\bar{u}\).

Finally, the directional optimality condition can be verified explicitly. Choose $z_1^*:=(1,0,0)
    \in N\bigl(P_1(\bar{z});\Lambda_1;0\bigr),
    \;
    z_2^*:=-1
    \in N\bigl(P_2(\bar{z});\Lambda_2;0\bigr)$. Since
\[
    \begin{aligned}
        \nabla J(\bar{z})=(0,-1,e_1),\;\nabla\langle z_1^*,P_1\rangle(\bar{z})=(0,1,0),\;
        \nabla\langle z_2^*,P_2\rangle(\bar{z})=(0,0,-e_1),
    \end{aligned}
\]
we obtain $0=\nabla J(\bar{z})
     +\nabla\langle z_1^*,P_1\rangle(\bar{z})
     +\nabla\langle z_2^*,P_2\rangle(\bar{z})$. 
\end{example}

\section{Conclusions}\label{sec:conclusions}
We have developed directional necessary optimality conditions for constrained programs in Asplund spaces. Under directional metric subregularity, weak\(^*\) strict Lipschitz continuity, and Hadamard directional differentiability, the analysis yields a scalarized directional optimality condition. Directional metric subregularity is verified through sequential sufficient conditions and directional pseudo-normality/quasi-normality, with directional partial sequential normal compactness used to control vanishing weak\(^*\) limits. For joint constraints, a sequential qualification of directional pseudo-normality type yields directional metric subregularity of the coupled system, from which an optimality condition follows.

\vskip 6mm

\noindent

\bibliographystyle{plain}
\bibliography{sn-bibliography}

@article{gfrerer2013directional,
  author    = {Gfrerer, H.},
  title     = {On directional metric regularity, subregularity and optimality conditions for nonsmooth mathematical programs},
  journal   = {Set-Valued Var. Anal.},
  volume    = {21},
  number    = {2},
  pages     = {151--176},
  year      = {2013}
}

@article{bai2019directional,
  author    = {Bai, K. and Ye, J. J. and Zhang, J.},
  title     = {Directional quasi-/pseudo-normality as sufficient conditions for metric subregularity},
  journal   = {SIAM J. Optim.},
  volume    = {29},
  number    = {4},
  pages     = {2625--2649},
  year      = {2019}
}

@article{benko2019calculus,
  author    = {Benko, M. and Gfrerer, H. and Outrata, J. V.},
  title     = {Calculus for directional limiting normal cones and subdifferentials},
  journal   = {Set-Valued Var. Anal.},
  volume    = {27},
  number    = {3},
  pages     = {713--745},
  year      = {2019}
}

@book{Mordukhovich2006variational,
  author    = {Mordukhovich, B. S.},
  title     = {Variational Analysis and Generalized Differentiation. {I}: Basic Theory},
  series    = {Grundlehren der mathematischen Wissenschaften},
  volume    = {330},
  publisher = {Springer},
  address   = {Berlin},
  year      = {2006}
}

@article{Bai2023Directional,
  author    = {Bai, K. and Ye, J. J.},
  title     = {Directional subdifferential of the value function},
  journal   = {Commun. Optim. Theory},
  pages     = {1--36},
  year      = {2023}
}

@article{long2017calculusb,
  author    = {Long, P. and Wang, B. and Yang, X.},
  title     = {Calculus of directional coderivatives and normal cones in {Asplund} spaces},
  journal   = {Positivity},
  volume    = {21},
  number    = {3},
  pages     = {1115--1142},
  year      = {2017}
}

@article{Ye1995,
  author    = {Ye, J. J. and Zhu, D.},
  title     = {Optimality conditions for bilevel programming problems},
  journal   = {Optimization},
  volume    = {33},
  number    = {1},
  pages     = {9--27},
  year      = {1995}
}

@article{Ye2010,
  author    = {Ye, J. J. and Zhu, D.},
  title     = {New necessary optimality conditions for bilevel programs by combining the {MPEC} and value function approaches},
  journal   = {SIAM J. Optim.},
  volume    = {20},
  number    = {4},
  pages     = {1885--1905},
  year      = {2010}
}

@article{ye1997optimal,
  author    = {Ye, J. J.},
  title     = {Optimal strategies for bilevel dynamic problems},
  journal   = {SIAM J. Control Optim.},
  volume    = {35},
  number    = {2},
  pages     = {512--531},
  year      = {1997}
}

@article{ye1995necessary,
  author    = {Ye, J. J.},
  title     = {Necessary conditions for bilevel dynamic optimization problems},
  journal   = {SIAM J. Control Optim.},
  volume    = {33},
  number    = {4},
  pages     = {1208--1223},
  year      = {1995}
}

@article{gfrerer2014metric,
  author    = {Gfrerer, H.},
  title     = {On metric pseudo-(sub)regularity of multifunctions and optimality conditions for degenerated mathematical programs},
  journal   = {Set-Valued Var. Anal.},
  volume    = {22},
  number    = {1},
  pages     = {79--115},
  year      = {2014}
}

@misc{mao2025directional,
  title         = {Directional Subdifferentials of the Value Function in Asplund Spaces},
  author        = {Mao, W. and Ye, J. J.},
  year          = {2026},
  eprint        = {2608.20241},
  archivePrefix = {arXiv},
  primaryClass  = {math.OC},
  url           = {https://arxiv.org/abs/2608.20241}
}

@article{long2017calculus,
  author    = {Long, P. and Wang, B. and Yang, X.},
  title     = {Calculus of directional subdifferentials and coderivatives in {Banach} spaces},
  journal   = {Positivity},
  volume    = {21},
  number    = {1},
  pages     = {223--254},
  year      = {2017}
}

@article{gfrerer2017new,
  author    = {Gfrerer, H. and Ye, J. J.},
  title     = {New constraint qualifications for mathematical programs with equilibrium constraints via variational analysis},
  journal   = {SIAM J. Optim.},
  volume    = {27},
  number    = {2},
  pages     = {842--865},
  year      = {2017}
}

@book{bonnans2013perturbation,
  author    = {Bonnans, J. F. and Shapiro, A.},
  title     = {Perturbation Analysis of Optimization Problems},
  publisher = {Springer},
  address   = {New York},
  year      = {2000}
}

@article{guo2013mathematical,
  author    = {Guo, L. and Ye, J. J. and Zhang, J.},
  title     = {Mathematical programs with geometric constraints in {Banach} spaces: enhanced optimality, exact penalty, and sensitivity},
  journal   = {SIAM J. Optim.},
  volume    = {23},
  number    = {4},
  pages     = {2295--2319},
  year      = {2013}
}

\end{document}